\definecolor{darkred}{rgb}{0.5,0,0}
\definecolor{darkgreen}{rgb}{0,0.5,0}
\definecolor{darkblue}{rgb}{0,0,0.5}
\numberwithin{equation}{section}
\newtheorem{thm}{Theorem}[section]
\newtheorem{conj}[thm]{Conjecture}
\newtheorem{prop}[thm]{Proposition}
\newtheorem{lemma}[thm]{Lemma}
\theoremstyle{definition}
\newtheorem{defn}[thm]{Definition}
\theoremstyle{remark}
\newtheorem{rem}[thm]{Remark}
\newtheorem{hyp}[thm]{Hypothesis}
\newtheorem{notation}[thm]{Notation}
\newcounter{notes}
{\end{list}}
\newcommand\qu{/\kern-.7ex/} 
\renewcommand{\setminus}{\smallsetminus}
\newcommand{\beq}{\begin{equation}}
\newcommand{\eeq}{\end{equation}}
\newcommand{\beqn}{\begin{equation*}}
\newcommand{\eeqn}{\end{equation*}}
\newcommand{\ov}{\overline}
\newcommand{\mb}{\mathbb}
\newcommand{\mc}{\mathcal}
\newcommand{\mf}{\mathfrak}
\newcommand{\wt}{\widetilde}
\newcommand{\wh}{\widehat}
\renewcommand{\i}{{\bm i}}
\newcommand{\ev}{{\rm ev}}
\newcommand{\vt}{{\rm vert}}
\title{The Symplectic Approach of Gauged Linear $\sigma$-Model}
\begin{document}

\author[Tian]{Gang Tian}
\address{
Beijing International Center for Mathematical Research\\
Beijing University\\
Beijing, 100871, China\\
and Department of Mathematics \\
Princeton University \\
Fine Hall, Washington Road \\
Princeton, NJ 08544 USA
}
\email{tian@math.princeton.edu}

\author[Xu]{Guangbo Xu}
\address{
Department of Mathematics \\
Princeton University\\
Fine Hall, Washington Road\\
Princeton, NJ 08544 USA
}
\email{guangbox@math.princeton.edu}

\date{\today}

\maketitle

\begin{abstract}
Witten's Gauged Linear $\sigma$-Model (GLSM) unifies the Gromov--Witten theory and the Landau--Ginzburg theory, and provides a global perspective on mirror symmetry. In this article, we summarize a mathematically rigorous construction of the GLSM in the geometric phase using methods from symplectic geometry. 
\end{abstract}



\setcounter{tocdepth}{1}
\tableofcontents

\section{Introduction}

Gauged linear $\sigma$-model (GLSM), introduced by Witten \cite{Witten_LGCY}, provides a fundamental framework in understanding two dimensional supersymmetric quantum field theories. It plays a significant role in the physical ``proof'' of mirror symmetry (\cite{Hori_Vafa}) and it leads to potential mathematical proof of the Landau--Ginzburg/Calabi--Yau correspondence (see the original physics literature \cite{VafaW, GVW, Martinec} and some mathematical approaches \cite{Chiodo_Ruan_2010, Clader_thesis} without using GLSM). Therefore it is an interesting and important task to develop the mathematical theory of GLSM. 

Before we outline the mathematical construction, let us mention a few previously studied mathematical theories that are related to GLSM. The first one is the Gromov--Witten theory, physically referred to as the (supersymmetric) nonlinear $\sigma$-model. It has been constructed for compact symplectic manifolds via a series of foundational work \cite{Ruan_96, Ruan_Tian, Ruan_Tian_97, Li_Tian, Fukaya_Ono}. Gromov--Witten theory has been well-studied and very influential in algebraic geometry and symplectic geometry. The second is the (orbifold) Landau--Ginzburg A-model theory, which was proposed by Witten \cite{Witten_spin} and constructed mathematically by Fan--Jarvis--Ruan \cite{FJR3, FJR2}. The third is a simple version of the GLSM (i.e., when $W = 0$ and no R-symmetry), which was introduced by Cieliebak--Gaio--Salamon \cite{Cieliebak_Gaio_Salamon_2000} and Mundet \cite{Mundet_thesis, Mundet_2003} (also see \cite{Mundet_Tian_2009, Mundet_Tian_draft}). The development of these theories provides us not only inspirations but also technical tools, especially, the virtual technique introduced in the study of Gromov--Witten theory. 

For us, a GLSM space is a quadruple $(X, G, \mu, W)$, where $X$ is a noncompact K\"aher manifold, $G$ is a reductive Lie group acting on $X$, $\mu$ is a moment map for the $G$-action, and $W: X \to {\mb C}$ is a holomorphic function (often called the superpotential) invariant under the $G$-action. We also require an ``R-symmetry'' (a ${\mb C}^*$-action on $X$) which makes $W$ quasihomogeneous. The variation of the moment map (the so-called Fayet-Iliopoulos term in physics terminology) put the theory into different phases. It is said in the ``geometric phase'' if $\mu^{-1}(0)$ intersects only with the smooth part of ${\rm Crit} W$ transversely, so that $\bar{X}_W:= {\rm Crit} W \qu G$ is an orbifold hypersurface of $\bar{X}:= X\qu G$. Among many examples, the most famous one is the quintic model, which, in the geometric phase, $\bar{X}_W$ is a quintic hypersurface inside $\mb{CP}^4$. 

Although it is not necessarily true in general, in certain cases, including the quintic model, the variation of the symplectic quotient can lead to another type of phases, namely Landau--Ginzburg models. In this phase, it is expected that the GLSM ``converges'' to the cohomological field theory of Fan--Jarvis--Ruan and Witten. This would lead to a prospective proof of the so-called Landau--Ginzburg/Calabi--Yau correspondence, and a way of computing Gromov--Witten invariants via Landau--Ginzburg models. There have been a few algebraic approaches (see \cite{Chiodo_Ruan_2010} \cite{Chiodo_Iritani_Ruan} \cite{CLLL_16}). A symplectic approach towards the LG/CY correspondence is another motivation of our project. 

The purpose of this article is to explain how to construct a so-called cohomological field theory (CohFT for short) associated to a GLSM space $(X, G, \mu, W)$. The construction in nature is similar to the construction of Gromov--Witten invariants. Namely, we start with a nonlinear elliptic equation, the gauged Witten equation, which is naturally associated with the problem. We construct virtual fundamental classes on the moduli spaces of equivalence classes of solutions to the gauged Witten equation. Using the virtual fundamental classes one can define various correlation functions (for all genera), which are multilinear functions on the (Chen-Ruan) cohomology of $\bar{X}_W$. In \cite{Tian_Xu, Tian_Xu_2, Tian_Xu_3}, the authors constructed an invariants for the so-called ``Lagrange multiplier type superpotential," based on the analysis of the gauged Witten equation over a {\it fixed} smooth Riemann surface. In this article, we describe a slightly different approach, which can deal with more general GLSM spaces (though only for the geometric phases). We can also extend the analysis to the case of degenerating families of Riemann surfaces. This allows us to define a collection of correlation functions which can be packaged into the CohFT. This article only serves as an outline of this approach, and the detail will appear in \cite{Limit1}. 

Parallel to our approach, there are also related constructions using algebra-geometric methods, see \cite{CLLL_15, CLLL_16} \cite{FJR_GLSM}. So far the algebraic-geometric methods cannot cover broad states and one has to show that correlation functions for only the narrow states also give a CohFT (for example, the space of narrow states is closed under quantum multiplication).

The remaining of this article is organized as  follows. In Section \ref{section2} we introduce the basic setting and state our results using the language of cohomological field theory. In Section \ref{section3} we explain how to write down the gauged Witten equation. Section \ref{section4} is about the moduli space of gauged Witten equations and a sketch of proof of compactness. In Section \ref{section5} we recall the abstract theory of the virtual cycle construction, which was initially brought up in \cite{Li_Tian}. In Section \ref{section6} we give a short description on how to apply the abstract virtual cycle theory to the current situation, and how to define the correlation function (the invariants) in the cohomological field theory. In Section \ref{section7} we discuss further developments and speculations. 

\subsection{Acknowledgements}

The authors would like to thank David Morrison, Edward Witten and Kentaro Hori for helpful discussions during the early stage of this project. G.X. would like to thank Chris Woodward for his interest in this project, and to thank Mauricio Romo for kindly answering many questions about the physics of GLSM.

\section{The Cohomological Field Theory of GLSM}\label{section2}

\subsection{The GLSM space}

Let $K$ be a compact Lie group, $G$ be its complexification, with Lie algebras ${\mf k}$ and ${\mf g}$. Denote $\hat{K} = K \times U(1)$, $\hat{G} = G \times {\mb C}^*$, and $\hat{\mf k} = {\mf k} \oplus {\bm i} {\mb R}$. We choose an ${\rm Ad}$-invariant metric on ${\mf k}$, so that ${\mf k}$ and $\hat{\mf k}$ are identified with their dual spaces.

Consider a noncompact K\"ahler manifold $(X, \omega_X, J_X)$ with the following structures.
\begin{enumerate}
\item A holomorphic ${\mb C}^*$-action which restricts to a Hamiltonian $S^1$-action, with a moment map $\mu_R: X \to {\rm Lie} S^1 \simeq {\bm i} {\mb R}$. This action is referred to as the {\bf R-symmetry}.	

\item A holomorphic $G$-action which restricts to a Hamiltonian $K$-action, with a moment map $\mu_K: X \to {\rm Lie} K \simeq {\mf k}$. 

\item A holomorphic function $W: X \to {\mb C}$.
\end{enumerate}
We require that these structures are compatible in the following sense.

\begin{hyp}\label{hyp21}\hfill
\begin{enumerate}
\item \label{hyp21a} The R-symmetry commutes with the $G$-action.

\item \label{hyp21b} $W$ is homogeneous of degree $r$, i.e., there is a positive integer $r$ such that 
\beqn
W(\xi x) = \xi^r W(x),\ \forall \xi \in {\mb C}^*,\ x\in X.
\eeqn

\item \label{hyp21c} $W$ is invariant with respect to the $G$-action.

\item \label{hyp21d} $0\in {\mf k}$ is a regular value of $\mu_K$.

\item \label{hyp21e} $\mu_K^{-1}(0)$ intersects transversely with only the smooth part of ${\rm Crit} W$. (Here ${\rm Crit} W$ is an analytic subvariety of $X$.)

\item \label{hyp21f} Let $X_W$ be the union of all irreducible components of ${\rm Crit} W$ that have nonempty intersections with $\mu_K^{-1}(0)$. Then $\mu_K|_{X_W}$ is proper, and there is a homomorphism $\iota_W: {\mb C}^* \to G$ such that 
\beqn
\xi \cdot x = \iota_W (\xi) \cdot x,\ \forall x\in X_W.
\eeqn

\end{enumerate}
\end{hyp}

\begin{rem}
\begin{enumerate}

\item Item \eqref{hyp21a} of Hypothesis \ref{hyp21} implies that there is a well-defined $\hat{G}$-action on $X$ with a moment map $\hat\mu =  (\mu_R, \mu_K)$ for the $\hat{K}$-action.

\item The objects considered in this paper are certain ``fields'' on surfaces with cylindrical ends that are asymptotic to $X_W \cap \mu_K^{-1}(0)$. On the cylindrical ends the fields satisfy roughly a gradient flow equation, and the transverse intersection $X_W \cap \mu_K^{-1}(0)$ means an Morse-Bott type asymptotic constrain.

\item The above hypothesis also allows us to consider the symplectic reductions
\begin{align*}
&\ \bar{X}:= \mu_K^{-1}(0)/K, &\ \bar{X}_W:= [ X_W \cap \mu_K^{-1}(0) ]/ K.
\end{align*}
Here $\bar{X}$ is a compact K\"ahler orbifold and $\bar{X}_W \subset \bar{X}$ is a closed suborbifold. 

\end{enumerate}
\end{rem}

For $\xi = (\xi_R, \xi_K) \in \hat{\mf k}$, denote by $\xi_R \mapsto {\mc X}_{\xi_R}$ and $\xi_K \mapsto {\mc Y}_{\xi_K}$ the infinitesimal R-symmetry and the infinitesimal $K$-actions respectively and denote ${\mc X}_\xi = {\mc X}_{\xi_R} + {\mc Y}_{\xi_K}$.

The following shows an important property of $W$.
\begin{lemma}\label{lemma23}
Any critical value of $W$ must be zero.
\end{lemma}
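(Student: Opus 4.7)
The plan is to derive Euler's identity for the homogeneous function $W$ from Hypothesis \ref{hyp21}\eqref{hyp21b} and then observe that it forces $W$ to vanish at any critical point.

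First, I would introduce the holomorphic vector field $V$ on $X$ generated by the full $\mb{C}^*$ R-symmetry; concretely, $V(x) = \frac{d}{ds}\big|_{s=0} e^s \cdot x$ for $x \in X$. Differentiating the homogeneity relation $W(\xi x) = \xi^r W(x)$ at $\xi = 1$ (writing $\xi = e^s$ and differentiating in $s \in \mb{C}$ at $s = 0$) yields the pointwise Euler identity
\beqn
dW_x(V(x)) = r W(x) \quad \text{for every } x \in X.
\eeqn
This is the only substantive ingredient, and it is an immediate consequence of holomorphy plus the homogeneity in Hypothesis \ref{hyp21}\eqref{hyp21b}; no compactness or global hypotheses are needed.

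Next, suppose $c \in \mb{C}$ is a critical value of $W$, so there exists $x_0 \in X$ with $dW_{x_0} = 0$ and $W(x_0) = c$. Substituting $x = x_0$ into the Euler identity gives $0 = r W(x_0) = r c$. Since $r$ is a \emph{positive} integer by Hypothesis \ref{hyp21}\eqref{hyp21b}, we may divide by $r$ and conclude $c = 0$.

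I do not foresee a genuine obstacle here; the statement is essentially the classical fact that a positively homogeneous holomorphic function vanishes at all of its critical points. The only minor point to be careful about is that the $\mb{C}^*$-action in the paper is described via its restriction to $S^1$ with moment map $\mu_R$, while the homogeneity is stated for the holomorphic extension to $\mb{C}^*$; the extension is automatic because the $\mb{C}^*$-action is assumed to be holomorphic from the outset, so the identification of the generating vector field $V$ with the holomorphic extension of $\mc{X}_{\xi_R}$ causes no trouble.
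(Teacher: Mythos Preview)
Your proof is correct and is essentially the same as the paper's: both differentiate the homogeneity relation $W(\xi x)=\xi^r W(x)$ at $\xi=1$ to obtain the Euler identity $dW_x({\mc X}_{\xi_R}(x))=r\xi_R W(x)$ (the paper phrases it via the $S^1$-generator ${\mc X}_{\xi_R}$ rather than your real-direction generator $V$, but this is only a cosmetic difference), and then evaluate at a critical point to conclude $W(x)=0$ since $r>0$.
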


\begin{proof}
By the homogeneity of $W$ with respect to the R-symmetry, at a critical point $x$ of $W$, we have $0 = {\mc X}_{\xi_R} W(x) = r \xi_R  W(x)$ for all $\xi_R$. Hence $W(x) = 0$.
\end{proof}

We also have the following assumption on the geometry at infinity. This is sometimes called a convex structure (cf. \cite[Section 2.5]{Cieliebak_Gaio_Mundet_Salamon_2002}).

\begin{hyp}\label{hyp24}
There exists $\xi_W$ in the center $Z({\mf k})\subset {\mf k}$, and a continuous function $\tau \mapsto c_W (\tau)$ (for $\tau \in Z({\mf k})$) satisfying the following condition. If we define ${\mc F}_W:= \mu_K\cdot \xi_W$, then ${\mc F}_W$ is proper and
\beq
\begin{array}{c} x \in X_W, \xi \in T_x X\\
  {\mc F}_W(x) \geq c_W(\tau)
                 							
\end{array} \Longrightarrow \left\{ \begin{array}{c} \langle \nabla_\xi \nabla {\mc F}_W(x), \xi\rangle + \langle \nabla_{J \xi} \nabla {\mc F}_W(x), J \xi \rangle \geq 0, \\
\langle \nabla {\mc F}_W(x), J {\mc Y}_{\mu_K(x) - \tau}(x) \rangle \geq 0. \end{array}\right. 
\eeq
\end{hyp}

\subsection{Example}

We first look at the example in which $\bar{X}_W$ gives hypersurfaces in weighted projective spaces. Consider a quasihomogeneous polynomial $Q: {\mb C}^N \to {\mb C}$. Namely, there are integers $r_1, \ldots, r_N, r$ such that 
\beqn
Q(\xi^{r_1} x_1, \ldots, \xi^{r_N} x_N) = \xi^r Q(x_1, \ldots, x_N),\ \forall \xi \in {\mb C}^*, (x_1, \ldots, x_N) \in {\mb C}^N.
\eeqn
It is called nondegenerate if $0\in {\mb C}^N$ is the only critical point of $Q$. It induces a ${\mb C}^*$-action (the $R$-symmetry) on ${\mb C}^N$ by 
\beqn
\xi \cdot (x_1, \ldots, x_N) = (\xi^{r_1} x_1, \ldots, \xi^{r_N} x_N).
\eeqn
The corresponding weighted projective space is ${\mb P}^{N-1}(r_1, \ldots, r_N) = ({\mb C}^N \setminus \{0\})/ {\mb C}^*$. $Q$ then defines a weighted projective hypersurface $\bar{X}_Q \subset {\mb P}^{N-1}(r_1, \ldots, r_N)$. 

Following Witten \cite{Witten_LGCY}, introduce $W: {\mb C}^{N+1}\to {\mb C}$ which is defined as
\beqn
W(p, x_1, \ldots, x_N) = p Q(x_1, \ldots, x_N). 
\eeqn
The critical locus of $W$ decomposes as 
\beq\label{eqn22}
{\rm Crit} W = \Big\{ (0, x_1, \ldots, x_N)\ |\ Q(x_1, \ldots, x_N) = 0 \Big\} \cup \Big\{ (p, 0, \ldots, 0) \ |\ p \in {\mb C} \Big\}.
\eeq
Consider another group action by $G= {\mb C}^*$ on ${\mb C}^{N+1}$ given by 
\beqn
\rho \cdot (p, x_1, \ldots, x_N) = (\rho^{-r} p, \rho^{r_1} x_1, \ldots, \rho^{r_N} x_N).
\eeqn
It is Hamiltonian and a moment map is 
\beqn
\mu_K (p, x_1, \ldots, x_N) = - \frac{\i}{2} \Big[ r_1 |x_1|^2 + \cdots + r_N |x_N|^2 - r |p|^2 - \tau \Big]
\eeqn
where $\tau$ is a constant, playing the role as a parameter of this theory. 

If we take $\tau>0$, then only the first component of \eqref{eqn22} intersects with $\mu_K^{-1}(0)$. One sees that 
\beqn
\bar{X}_W = \bar{X}_Q. 
\eeqn
Item \eqref{hyp21f} of Hypothesis \ref{hyp21} holds for $\iota_W (\xi) = \xi \in {\mb C}^* \simeq  G$, since along $X_W$, the coordinate $p$ vanishes. Moreover, for Hypothesis \ref{hyp24}, ${\mc F}_W$ is roughly $r_1|x_1|^2 + \cdots + r_N |x_N|^2 - r|p|^2$. 

When $\tau<0$, the theory is related to the Landau--Ginzburg theory. This is not a geometric phase since item \eqref{hyp21e} of Hypothesis \ref{hyp21} fails. 

In \cite{Witten_LGCY} the above example were extended the cases in which $\bar{X}_W$ gives complete intersections in toric varieties. There are a few more examples that appeared more recently in physics literature (see \cite{JKLMR_2, JKLMR}) where $G$ is nonabelian. 

\subsection{The cohomological field theory}

The notion of cohomological field theory (see \cite{Manin_2}) is a way to axiomize Gromov--Witten theory and other similar theories. We recall its definition. Let $\Lambda$ be a commutative algebra over ${\mb Q}$, and let $\ov{\mc M}_{g, k}$ be the Deligne--Mumford space of stable curves.

\begin{defn}\cite{Manin_2}\label{defn25}
A $\Lambda$-valued cohomological field theory (CohFT for short) consists of a ${\mb Z}_2$-graded $\Lambda$-vector space ${\mc H}$ (called the state space) equipped with a $\Lambda$-valued nondegenerate bilinear form, and a collection of correlators ($\Lambda$-multilinear functions)
\beqn
\langle \sqbullet \rangle_{g, k}: {\mc H}^{\otimes k} \times H^*( \ov{\mc M}_{g, k}; {\mb Q}) \to \Lambda,\ g, k \geq 0,
\eeqn
whose evaluation on homogeneous elements is denoted by $\langle \alpha_1 \otimes \cdots \otimes \alpha_k; \beta \rangle_{g, k}$, that satisfy the following two splitting axioms.

\begin{enumerate}
\item {\bf Non-separating node.} Suppose $2g+ k> 3$. Let $\gamma\in H^2(\ov{\mc M}_{g, k};{\mb Q})$ be the class dual to the divisor of configurations obtained by shrinking a non-separating loop (which is the image of a map $\iota_\gamma: \ov{\mc M}_{g-1, k+2} \to \ov{\mc M}_{g, k}$). Then 
\beqn
\langle \alpha_1\otimes \cdots \otimes \alpha_k; \beta \cup \gamma \rangle_{g, k} = \langle \alpha_1 \otimes \cdots \otimes \alpha_k \otimes \Delta; \iota_\gamma^* \beta \rangle_{g-1, k+2}.
\eeqn
Here $\Delta = \sum_j \delta_j \otimes \delta^j \in {\mc H} \otimes {\mc H}$ is the ``diagonal'' class representing the pairing.

\item {\bf Separating node.} Let $\gamma \in H^2(\ov{\mc M}_{g, k}; {\mb Q})$ be the class dual to the divisor of configurations obtained by shrinking a separating loop, which is the image of a map $\iota_\gamma: \ov{\mc M}_{g_1, k_1 + 1} \times \ov{\mc M}_{g_2, k_2 + 1} \to \ov{\mc M}_{g, k}$, also characterized by a decomposition $\{1, \ldots, k\} = I_1 \sqcup I_2$ with $|I_i| = k_i$. Suppose $2g_i - 2 + k_i \geq 0$. Then for any $\beta \in H^*(\ov{\mc M}_{g, k};{\mb Q})$, if we write $\iota_\gamma^* \beta = \sum_l \beta_l' \otimes \beta_l''$ by K\"unneth decomposition, then 
\beqn
\langle \alpha_1 \otimes \cdots \otimes \alpha_k; \beta \cup \gamma \rangle_{g, k} = \epsilon(I_1, I_2) \sum_{j, l} \langle \alpha_{I_1} \otimes \delta_j; \beta_l' \rangle_{g_1, k_1+1} \langle \alpha_{I_2}\otimes \delta^j; \beta_l'' \rangle_{g_2, k_2+1}\eeqn
where $\epsilon(I_1, I_2)$ is the sign of permutations for odd-dimensional $\alpha_i$'s.
\end{enumerate}
\end{defn}

Recall the CohFT associated to Gromov--Witten theory. Let $\Lambda$ be the Novikov ring
\beqn
\Lambda:= \Big\{ \sum a_i q^{\lambda_i}\ |\ a_i \in {\mb Q},\ \lambda_i \in {\mb R},\ \lim_{i \to \infty} \lambda_i = +\infty \Big\}.
\eeqn
In Gromov--Witten theory, for a compact symplectic manifold $(M, \omega)$, ${\mc H} = H^*(M; \Lambda)$ is equipped with the Poincar\'e pairing. The correlation functions $\langle \alpha_1 \otimes \cdots \alpha_k; \beta \rangle_{g, k}$ are just the Gromov--Witten invariants. It is defined as follows. Taking a compatible almost complex structure $J$ on $M$, the moduli space of stable $J$-holomorphic curves representing class $A \in H_2(M; {\mb Z})$, denoted by $\ov{\mc M}_{g, k}(M; A)$, admits a virtual fundamental cycle. There is the evaluation map 
\beqn
{\rm ev}: \ov{\mc M}_{g, k}(M; A) \to M^k \times \ov{\mc M}_{g, k}.
\eeqn
The virtual fundamental cycle is pushed forward by the evaluation map to a class
\beqn
\big[ \ov{\mc M}_{g, k}(M; A) \big]^{\rm vir} \in H_*(M^k \times \ov{\mc M}_{g, k}; {\mb Q}).
\eeqn
The pairing against the cohomology class $\alpha_1 \otimes \cdots \otimes \alpha_k$ and $\beta$ gives the correlation function
\beqn
\langle \alpha_1 \otimes \cdots \otimes \alpha_k; \beta \rangle_{g, k}:= \sum_{A \in H_2(M; {\mb Z})} q^{\omega(A)} \Big\langle \big[ \ov{\mc M}_{g, k}(M; A) \big]^{\rm vir},  \alpha_1 \otimes \cdots \otimes \alpha_k \otimes \beta \Big\rangle.
\eeqn
The fact that this is a well-defined $\Lambda$-multilinear map follows from Gromov compactness. The two splitting axioms of CohFT can be derived from the properties of the virtual fundamental cycles (see \cite{Ruan_Tian_97} and \cite{Fukaya_Ono}). Further, the Gromov--Witten theory were extended to orbifolds in \cite{Chen_Ruan_2002}, where the state spaces are replaced by the Chen-Ruan orbifold cohomology \cite{Chen_Ruan_cohomology}. 

The Landau--Ginzburg correlation functions defined in \cite{FJR2} can also be packaged into a CohFT. In that case, the moduli spaces do not have components labelled by the degree (namely, the class $A$) so the use of the Novikov ring is not necessary.

We give a formal description of the CohFT associated to a GLSM space $(X, G, W, \mu)$, whose construction occupies the rest of this paper. The state space is ${\mc H}:= H_{\rm CR}^*(\bar{X}_W; \Lambda)$, the Chen-Ruan orbifold cohomology of $\bar{X}_W$. When the quotient $\bar{X}_W= [X_W \cap \mu_K^{-1}(0)] /K$ is free (for example, in the quintic model), $H_{\rm CR}^*(\bar{X}_W; \Lambda)$ is the same as the ordinary cohomology. When the quotient is not free, there exist finite order elements $\hat\gamma \in K$ such that $X_{W, \hat\gamma}\cap \mu_K^{-1}(0) = [{\rm Fix} (\hat\gamma) \cap X_W ]\cap \mu_K^{-1}(0) \neq \emptyset$. The quotient $[X_{W, \hat\gamma} \cap \mu_K^{-1}(0)]/K$ is denoted by $\bar{X}_{W, [\hat\gamma]}$, which only depends on the conjugacy class $[\hat\gamma]$. In this case, $H_{\rm CR}^*(\bar{X}_W, \Lambda)$ is the direct sum of the ordinary cohomology of $\bar{X}_W$ with components corresponding to the ``twisted sectors,'' namely, $H^*( \bar{X}_{W; [\hat\gamma]}; \Lambda)$. The degree in the twisted sectors needs to be shifted by certain rational numbers $\iota([\hat\gamma])$, i.e., for $\alpha \in H^*(\bar{X}_{W, [\hat\gamma]}; \Lambda)$, 
\beqn
{\rm deg}_{\rm CR}(\alpha) = {\rm deg}(\alpha) - \iota([\hat\gamma]).
\eeqn
The ${\mb Z}_2$-grading on ${\mc H}$, though, is still defined by the ordinary cohomology degree.

To define the correlation functions, one consider certain moduli spaces $\ov{\mc M}_{g, k}(X, W; B)$ of gauge equivalence classes of solutions to the {\bf gauged Witten equation}. Here $B$ is an equivariant class in $H_2^K({X}_W; {\mb Q})$, playing a similar role as the class $A\in H_2(M; {\mb Z})$ in Gromov--Witten theory. In particular, $B$ can be evaluated against the equivariant Chern class $c_{\, 1}^K(TX)$ and the equivariant symplectic class $\omega_K$. We will describe the gauged Witten equation in the next section. There is an evaluation map 
\beqn
\ev: \ov{\mc M}_{g, k}(X, W; B) \to \bigsqcup_{[\hat\gamma]} \bar{X}_{W,[\hat\gamma]} \times \ov{\mc M}_{g,k}.
\eeqn
Notice that the ordinary cohomology of the disjoint union $\bigsqcup_{[\hat\gamma]} \bar{X}_{W, [\hat\gamma]}$ (the so-called inertia stack) is the Chen-Ruan cohomology. Moreover, one can construct a virtual fundamental cycle on $\ov{\mc M}_{g, k}(X, W; B)$, which is pushed forward via the above evaluation map to a class
\beqn
\big[ \ov{\mc M}_{g, k}(X, W; B)\big]^{\rm vir} \in \left[ \bigoplus_{[\hat\gamma]} H_* (\bar{X}_{W, [\hat\gamma]}; {\mb Q}) \right]^{\otimes k} \otimes H_*(\ov{\mc M}_{g, k}; {\mb Q}).
\eeqn
This class can be evaluated against cohomology classes. Our main result is 
\begin{thm}\cite{Limit1}
The collection of virtual cycles $[\ov{\mc M}_{g, k}(X, W; B)]^{\rm vir}$ define (in the same fashion as defining orbifold Gromov--Witten invariants) a CohFT on $H_{\rm CR}^*(\bar{X}_W; \Lambda)$.
\end{thm}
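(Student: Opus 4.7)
The plan is to verify the two splitting axioms of Definition \ref{defn25} by a degeneration analysis of the moduli spaces $\ov{\mc M}_{g,k}(X, W; B)$ over the boundary divisors of Deligne--Mumford space, with well-definedness coming from the compactness results of Section \ref{section4} combined with the virtual cycle formalism of Section \ref{section5}. For each choice of inputs the virtual dimension of $\ov{\mc M}_{g, k}(X, W; B)$ is an affine function of $\langle c_{\,1}^K(TX), B \rangle$, so only discrete families of classes $B$ contribute a nonzero pairing. Among these, compactness together with positivity of the symplectic energy $\langle \omega_K, B \rangle$ on nonempty components ensures that the formal sum over $B$ assembles into a genuine element of the Novikov ring $\Lambda$. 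Multilinearity in the state-space inputs is built into the construction through the evaluation maps.

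For the non-separating node axiom I would consider a one-parameter degeneration of stable curves in which a non-separating loop is pinched. The gauged Witten equation on a long neck degenerates to a pair of cylindrical ends attached to a surface of genus $g-1$ with $k+2$ marked points; on each end, by the asymptotic analysis already used to define the evaluation map, the field decays to a point in $X_W \cap \mu_K^{-1}(0)$ modulo a gauge monodromy $\hat\gamma \in \hat K$, and the matching of the two ends at the node forces the pair of asymptotic evaluations to lie on the diagonal of the inertia stack $\bigsqcup_{[\hat\gamma]} \bar{X}_{W,[\hat\gamma]}$. A gluing/breaking analysis then identifies the pullback $\iota_\gamma^\ast [\ov{\mc M}_{g,k}(X,W;B)]^{\rm vir}$ with the fiber product of $[\ov{\mc M}_{g-1, k+2}(X, W; B)]^{\rm vir}$ with the diagonal in the inertia stack; pushing forward via the evaluation produces precisely the class $\Delta = \sum_j \delta_j \otimes \delta^j$ appearing in the axiom.

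The separating node axiom follows the same template, with the boundary fiber identified with a fiber product of two lower-strata moduli spaces $\ov{\mc M}_{g_i, k_i+1}(X, W; B_i)$ over the diagonal in the inertia stack and the decomposition $B = B_1 + B_2$; the K\"unneth decomposition $\iota_\gamma^\ast \beta = \sum_l \beta_l' \otimes \beta_l''$ together with the splitting of $\Delta$ yields the stated identity, and the sign $\epsilon(I_1, I_2)$ arises from reordering the odd-degree inputs across the two factors. The main obstacle will be the gluing theorem for the gauged Witten equation near a nodal configuration: one must show that the local structure of the moduli space is a fiber product as virtually smooth orbifolds rather than merely set-theoretically, so that the virtual cycles can be compared on both sides. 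This requires controlling the gauge monodromy at the node under the gluing parameter, constructing compatible Kuranishi charts that extend across the degenerating family of Riemann surfaces alluded to in the introduction, and checking that the evaluation maps into the inertia stack are transverse enough for the algebraic fiber product to represent the geometric intersection. Everything else, including the Novikov packaging and the signs, is essentially bookkeeping once this analytic input is in place.
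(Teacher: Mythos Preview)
Your proposal is correct and follows essentially the same approach as the paper, which itself only sketches the argument: the paper constructs the virtual atlas in Section~\ref{section6} and then says the splitting axioms ``can be shown by essentially the same methods as proving the splitting axioms of Gromov--Witten invariants'' with references to \cite{Ruan_Tian_97} and \cite{Fukaya_Ono}. Two small points worth tightening: the paper's compactification allows soliton components on infinite cylinders (Section~\ref{section4}), so your degeneration picture at a node should in principle accommodate chains of solitons between the two principal pieces; and the virtual framework here is the Li--Tian style virtual orbifold atlas of Section~\ref{section5} rather than Kuranishi structures, though the distinction is largely one of packaging.
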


\section{The Gauged Witten Equation}\label{section3}

In this section we introduce the gauged Witten equation. It can be viewed as a combination of the symplectic vortex equation (\cite{Cieliebak_Gaio_Salamon_2000}, \cite{Mundet_thesis, Mundet_2003}) and the Witten equation (\cite{Witten_spin}, \cite{FJR1, FJR2, FJR3}). In particular, compared to Gromov--Witten theory, there are two additional features. First, the equation is defined not for ordinary curves but for $r$-spin curves, the same for the Witten equation. Second, the equation is not conformal invariant and we need to choose volume forms on curves. The second feature is actually very important. Because of the flexibility of volume forms, we can manipulate them for our purposes.

\subsection{$r$-spin curves}

\begin{defn}
A (smooth) $r$-spin curve ${\mc C}$ consists of a compact orbifold Riemann surface $\Sigma_{\mc C}$ whose orbifold points are $z_1, \ldots, z_k$, an orbifold holomorphic line bundle $L_R \to \Sigma_{\mc C}$, and an isomorphism 
\beq\label{eqn31}
\varphi_R: L_R^{\otimes r} \simeq K_{\Sigma_{\mc C}} \otimes {\mc O}(z_1) \otimes \cdots \otimes {\mc O}(z_k).
\eeq
The orbifold structure of $L_R$ at $z_j$ is classified by an integer $m_j \in \{0, 1, \ldots, r-1\}$. If $m_j = 0$, we say that $z_j$ is {\bf broad}; otherwise it is called {\bf narrow}.
\end{defn}

The right hand side of \eqref{eqn31} is called the log-canonical bundle of ${\mc C}$, denoted by $\omega_{\mc C}$. 

The moduli space of stable $r$-spin curves of genus $g$ with $k$ orbifold points, denoted by $\ov{\mc M}{}_{g, k}^r$, is a branched cover over the Deligne-Mumford space $\ov{\mc M}_{g, k}$ (see \cite{FJR2} for more details about $\ov{\mc M}{}_{g, k}^r$, or the moduli space of more general $W$-curves). For any collection $m_1, \ldots, m_k\in \{0, 1, \ldots, r-1\}$, we have a connected component (could be empty) $\ov{\mc M}{}_{g, k}^r(m_1, \ldots, m_k) \subset \ov{\mc M}{}_{g, k}^r$.

Let $\Sigma_{\mc C}^*$ be the (open) surface obtained by removing the punctures $z_1, \ldots, z_k$. There is a way of choosing a cylindrical metric on $\Sigma_{\mc C}^*$ which varies smoothly over $\ov{\mc M}{}_{g, k}^r$. Further, there is a canonically induced Hermitian metric $H_R$ on $L_R|_{\Sigma_{\mc C}^*}$ from the cylindrical metric on $\Sigma_{\mc C}^*$, using the $r$-spin structure. Let $A_R$ be the Chern connection of $H_R$, which varies smoothly over $\ov{\mc M}{}_{g, k}^r$. A useful fact is that the curvature form $F_{A_R}$ of $A_R$ is uniformly bounded with respect to the cylindrical metric.

Let $P_R\to \Sigma_{\mc C}^*$ be the unit circle bundle with respect to the metric $H_R$, which is a principal $S^1$-bundle. Consider a principal $K$-bundle $P_K \to \Sigma_{\mc C}^*$ and let $P_{\mc C}^*$ be the fibre product $P_R \times_{\Sigma_{\mc C}^*} P_K$, which is a principal $\hat{K}$-bundle. Form the associated fibre bundle $\pi_Y: Y_{\mc C}^* \to \Sigma_{\mc C}^*$, where $Y_{\mc C}^*:= P_{\mc C}^* \times_{\hat K} X$. We lift $W$ to ${\mc W}\in \Gamma( Y_{\mc C}^*, \pi_Y^* \omega_{\mc C})$, defined by 
\beqn
{\mc W}([p_R, p_K, x]) = W(x) \varphi_R( (p_R)^r).
\eeqn
By the fact that $W$ is homogeneous of degree $r$, ${\mc W}$ is a well-defined section. 

Consider arbitrary connection 1-forms $A_K \in {\mc A}(P_K)$. Let $A = (A_R, A_K)$ be the induced connection on $P_{\mc C}^*$. We denote by ${\mc A}_W (P_{\mc C}^*)\subset {\mc A}(P_{\mc C}^*)$ the subset of connections obtained in this way. An important fact is that each $A$ induces an integrable complex structure on the total space $Y_{\mc C}^*$ and ${\mc W}$ is holomorphic with respect to this complex structure.

The K\"ahler metric is $\hat K$-invariant, hence there is an induced Hermitian metric on the vertical tangent bundle $T^\vt Y_{\mc C}^* \to Y_{\mc C}^*$. Therefore, one can dualize the differential $d{\mc W}^\vt \in \Gamma(Y_{\mc C}^*, \pi_Y^*\omega_{\mc C} \otimes (T^\vt Y_{\mc C}^* )^\vee)$, obtaining the gradient
\beqn
\nabla {\mc W} \in \Gamma(Y_{\mc C}^*, \pi_Y^* \omega_{\mc C}^\vee \otimes T^\vt Y_{\mc C}^* ).
\eeqn
On the other hand, for each $A \in {\mc A}_W(P_{\mc C}^*)$ and $u \in {\mc S}(Y_{\mc C}^*)$, one can form 
\beqn
\ov\partial_A u \in \Gamma( \Sigma_{\mc C}^*, \omega_{\mc C} \otimes u^* T^\vt Y_{\mc C}^*)=: \Omega^{0,1}(\Sigma_{\mc C}^*, u^* T^\vt Y_{\mc C}^*).
\eeqn
It lies in the same space as $u^* \nabla {\mc W} =: \nabla {\mc W} (u)$. Hence we can write the {\it Witten equation}
\beqn
\ov\partial_A u + \nabla {\mc W} (u) = 0.
\eeqn	
We need to fixed the complex gauge by imposing a curvature condition on $A_K$, that is, the following {\it vortex equation}
\beq\label{eqn32}
* F_{A_K} + \mu_K(u) = 0,
\eeq
where $*$ is the Hodge star operator with respect to the cylindrical metric. The {\bf gauged Witten equation} is the following system on $(A, u)\in {\mc A}_W (P_{\mc C}^*) \times {\mc S}(Y_{\mc C}^*)$
\begin{align}\label{eqn33}
&\ \ov\partial_A u + \nabla {\mc W} (u) = 0,\ &\ * F_{A_K} + \mu_K (u) = 0.
\end{align}
There is an obvious gauge symmetry of this system, similar to the case of symplectic vortex equation. However, one has a crucial difference, which is, the first equation is not invariant under complex gauge transformation.

\subsection{Energy}

Choose $p>2$. Consider pairs $(A, u) \in {\mc A}^{1, p}_{W; {\rm loc}}(P_{\mc C}^*) \times {\mc S}^{1, p}_{\rm loc}(Y_{\mc C}^*)$. With respect to a local trivialization of $P_{\mc C}^*$ over a chart $U \subset \Sigma_{{\mc C}}^*$, $u$ can be viewed as a map from $U$ to $X$ and $A$ is identified with $d + \phi ds + \psi dt$, where $\phi, \psi\in W^{1, p}_{\rm loc}(U, \hat{\mf k})$ and $s, t$ are local coordinates of $U$. The covariant derivative of $u$, in local coordinates, reads
\beqn
d_A u = ds \otimes (\partial_s u + {\mc X}_\phi(u)) + dt \otimes (\partial_t u + {\mc X}_\psi(u)).
\eeqn
For such a pair $(A, u)$, define its {\bf energy} as
\begin{multline*}
E(A, u) =  \frac{1}{2} \Big( \| d_A u \|_{L^2}^2 + \| \mu_K (u) \|_{L^2}^2 + \| F_{A_K} \|_{L^2}^2 \Big) + \| \nabla {\mc W} (u)\|_{L^2}^2\\
        = \frac{1}{2} \left[ \int_{\Sigma_{{\mc C}}^*} \Big( | d_A u|^2 +  |\mu_K(u)|^2+ |F_{A_K}|^2 + 2 |\nabla {\mc W} (u)|^2  \Big) \sigma_c \right].
\end{multline*}
A solution over $C \subset \Sigma_{\mc C}^*$ is called {\bf bounded} if it has finite energy and there exists a $\hat{K}$-invariant compact subset $N \subset X$ such that $u(C) \subset P\times_{\hat{K}} N$. 

Let $\wt{\mc M}_{\mc C}(X, W)$ be the set of smooth bounded solutions and ${\mc M}_{\mc C}(X, W)$ be its quotient by smooth gauge transformations.

\subsection{Asymptotic behavior and the homology class of solutions}

We show the following properties of the gauged Witten equation. Denote $Y_W^*:= P_{\mc C}^* \times_{\hat K} X_W$. 

\begin{thm}\label{thm33}
Given a smooth $r$-spin curve ${\mc C}$ and ${\bf v} = (A, u)\in \wt{\mc M}_{\mc C}(X, W)$. Then
\begin{enumerate}

\item \label{thm33a} $u$ is holomorphic with respect to $A$ and $u(\Sigma_{\mc C}^*) \subset Y_W^*$.

\item \label{thm33b} For each puncture $z_j$ of ${\mc C}$ whose $r$-spin monodromy is $\exp(2\pi m_j/r) \in {\mb Z}_r$, there exist $x_j \in \mu_K^{-1}(0)\cap X_W$ and $\lambda_j \in {\mf k}$, such that under certain trivialization of $Y_{\mc C}^*$, 
\beqn
\lim_{z \to z_j} e^{-[ {\bm i} m_j/r +  \lambda_j] t} u(s, t) = x_j,\ e^{2\pi {\bm i} m_j/r} e^{2\pi \lambda_j} \cdot x_j = x_j.
\eeqn
\end{enumerate}
\end{thm}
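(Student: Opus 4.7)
The plan is to establish \eqref{thm33a} by an energy identity that forces $\bar\partial_A u$ and $\nabla \mc{W}(u)$ to vanish separately, and then to deduce \eqref{thm33b} on each cylindrical end by reducing the gauged Witten equation to a symplectic vortex flow on $X_W$ and invoking Morse--Bott asymptotic analysis.

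For \eqref{thm33a}, I would expand the squared Witten equation
\[
0 = |\bar\partial_A u + \nabla \mc{W}(u)|^2 = |\bar\partial_A u|^2 + |\nabla \mc{W}(u)|^2 + 2\,{\rm Re}\langle \bar\partial_A u, \nabla \mc{W}(u)\rangle
\]
and integrate over $\Sigma_{\mc C}^*$. The key input is that, since $A \in \mc{A}_W(P_{\mc C}^*)$ endows $Y_{\mc C}^*$ with an integrable complex structure for which $\mc{W}$ is holomorphic, working in a local trivialization in which $\mc{W} = f(y, z)\, dz$ with $f$ holomorphic in both $y$ and $z$ yields the pointwise identity
\[
\bar\partial(u^* \mc{W}) = \langle \bar\partial_A u, \nabla \mc{W}(u) \rangle\, \sigma_c,
\]
up to a canonical real identification. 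Stokes' theorem then converts the cross-term integral into a sum of boundary integrals $\int_{|z - z_j| = \epsilon} u^* \mc{W}$ around the punctures. Finite energy, boundedness of $u$, and Lemma \ref{lemma23} (every critical value of $W$ is zero) force $u^* \mc{W}(z) \to 0$ as $z \to z_j$, so each boundary contribution vanishes in the limit $\epsilon \to 0$. Hence $\bar\partial_A u \equiv 0$ and $\nabla \mc{W}(u) \equiv 0$, so $u$ is $A$-holomorphic and lands in ${\rm Crit}\,\mc{W}$. To refine to $u(\Sigma_{\mc C}^*) \subset Y_W^*$, the vortex equation $*F_{A_K} + \mu_K(u) = 0$ combined with the finite $L^2$ norm of $\mu_K(u)$ forces $\mu_K(u(z)) \to 0$ at each puncture; connectedness of the image together with the decomposition of ${\rm Crit}\,W$ into irreducible components then places $u(\Sigma_{\mc C}^*)$ in the single component meeting $\mu_K^{-1}(0)$, namely $Y_W^*$.

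For \eqref{thm33b}, I would localize to a single puncture $z_j$ and work on the cylindrical end $(-\infty, 0] \times \mb R/2\pi\mb Z$ provided by the chosen metric, incorporating the $\mb Z_r$-monodromy of $L_R$ via an explicit twist $e^{\i m_j t / r}$ in the trivialization of $P_R$. After a gauge transformation placing $A_K$ in temporal gauge ($\phi \equiv 0$) and using \eqref{thm33a} to kill the gradient term, the gauged Witten system reduces to
\begin{align*}
\partial_s u + J\bigl(\partial_t u + \mc{Y}_{\psi}(u) + \mc{X}_{\i m_j/r}(u)\bigr) &= 0, \\
\partial_s \psi + \mu_K(u) &= 0,
\end{align*}
i.e.\ the downward gradient flow of a Yang--Mills--Higgs type functional on the space of loops in $X_W$. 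By Hypothesis \ref{hyp21} \eqref{hyp21d}, \eqref{hyp21e}, \eqref{hyp21f}, the critical set of this functional is Morse--Bott, modeled on $\hat K$-orbits through points $x \in \mu_K^{-1}(0) \cap X_W$ fixed by the combined R- and $K$-monodromy around $z_j$. A standard Morse--Bott decay argument, in the spirit of the convergence results of \cite{Mundet_thesis, Mundet_Tian_2009}, then gives exponential convergence of $(u, \psi)$ modulo gauge to a limit orbit; setting $x_j$ to be a chosen representative, $\lambda_j \in \mf{k}$ to be the asymptotic value of $\psi$, and undoing the absorbed rotations yields the asymptotic formula, while the identity $e^{2\pi \i m_j/r} e^{2\pi \lambda_j} \cdot x_j = x_j$ records that the full holonomy around $z_j$ must fix the limit.

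The main obstacle will be the Morse--Bott step in \eqref{thm33b}: converting the $L^2$ finite-energy hypothesis into pointwise exponential decay of $|d_A u|$, $|\nabla \mc{W}(u)|$ and $|\mu_K(u)|$ with constants controlled by the Morse--Bott gap requires an $\varepsilon$-regularity / mean-value bootstrap followed by a linearized analysis at the critical orbit. The non-compactness of $X$ and the R-symmetry twist (which breaks translation invariance of the cylindrical equation) both demand care and are the places where the convexity Hypothesis \ref{hyp24} is invoked to prevent escape to infinity. As a final caveat, parts \eqref{thm33a} and \eqref{thm33b} are somewhat entangled, since strict vanishing of the boundary term in the cross-term calculation of \eqref{thm33a} technically uses the asymptotic decay produced in \eqref{thm33b}; in practice the two parts should be proved together, using an initial weak $L^p$ decay at the punctures to justify the Stokes computation, then promoting to Morse--Bott exponential decay afterwards.
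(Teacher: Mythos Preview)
Your approach to \eqref{thm33a} matches the paper's: both expand the Witten equation, use Stokes' theorem to convert the cross term $\langle \bar\partial_A u, \nabla \mc{W}(u)\rangle$ into boundary contributions at the punctures, and kill those via Lemma~\ref{lemma23}. The paper is explicit that the decay $\nabla \mc{W}(u)\to 0$ at the punctures must be established \emph{first} (citing \cite[Proposition~4.5]{Tian_Xu}) in order to invoke Lemma~\ref{lemma23} on the boundary terms---precisely the entanglement you flag in your final paragraph.

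For \eqref{thm33b} the routes genuinely diverge. You propose a Morse--Bott gradient-flow analysis on the cylindrical end, proving exponential convergence of $(u,\psi)$ to a critical orbit via $\varepsilon$-regularity and linearized spectral estimates. The paper instead takes a shorter path: once $u$ is $A$-holomorphic with image in $X_W$ and $\mu_K(u)\to 0$ near the puncture, it \emph{projects} $u$ to a holomorphic map into the compact K\"ahler orbifold $\bar{X}_W$ (the projection $\mu_K^{-1}(0)\to \bar{X}_W$ being holomorphic by the K\"ahler condition) and then invokes the standard removal of singularities for orbifold holomorphic curves. This bypasses the Morse--Bott machinery entirely and outsources the asymptotics to classical pseudoholomorphic curve theory. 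Your approach would yield more---explicit exponential decay rates controlled by the Morse--Bott gap, which are useful later for gluing---but at the cost of exactly the analytic bootstrap you identify as the main obstacle; the paper's projection argument trades that work for a one-line appeal to removable singularities.
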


\begin{proof}[Sketch of Proof]
Firstly, using similar method as 	\cite[Proposition 4.5]{Tian_Xu}, one can prove that $\nabla{\mc W} (u)$ converges to zero as approaching to punctures. By a simple computation and using Stokes' theorem, $\| \ov\partial_A u\|_{L^2}^2$ is equal to the sum of limits of ${\mc W} (u)$ at punctures. It follows from Lemma \ref{lemma23} that $\ov\partial_A u = 0$ and hence $u(\Sigma_{\mc C}^*) \subset Y_W^*$, hence prove Item \eqref{thm33a}. Next, one can show that $\mu_K(u)$ converges to zero as approaching to punctures. Therefore, near punctures, one can project $u$ to a holomorphic map into $\bar{X}_W$ (here the projection is holomorphic by the K\"ahler condition). By the usual removal of singularity (for orbifold holomorphic maps), one obtains Item \eqref{thm33b}.
\end{proof}

Denote $\hat\gamma_j = e^{2\pi [\iota_W({\bm i} m_j/r) + \lambda_j]}\in K$. Its conjugacy class $[\hat\gamma_j]$ in $K$ is gauge-invariant. Then $x_j \in {\rm Fix} X_{\hat\gamma_j} \cap \mu_K^{-1}(0) \cap X_W$. Since the $K$-action on $\mu_K^{-1}(0)$ has only finite stabilizers, $[\hat\gamma_j] $ belongs to only a finite collection of possible conjugacy classes.  We define an orbifold completion $Y_{\mc C}([\hat\gamma_1], \ldots, [\hat\gamma_k]) \to \Sigma_{\mc C}$ of $Y_{\mc C}^*$, constructed by adding orbifold charts near $z_j$, whose transition function between the original chart and the orbifold chart is $e^{[\iota_W({\bm i} m_j/r) + \lambda_j] t}$. Then Theorem \ref{thm33} implies that $u$ extends to a continuous orbifold section of $Y_{\mc C}([\hat\gamma_1], \ldots, [\hat\gamma_k])$. Therefore ${\bf v}$ defines a rational equivariant homology 2-class of $X_W$, denoted by $[{\bf v}]$. Its image in $H_2^K(X; {\mb Q})$ is also denoted by $[ {\bf v} ]$. Notice that $[{\bf v}]$ contains the topological information of the bundles $L_R$ and $P_K$. One can prove an {\it a priori} energy bound using similar computations as in the case of symplectic vortex equation (see for example \cite{Cieliebak_Gaio_Mundet_Salamon_2002}).

\begin{thm}\label{thm34} There is a function $e: H_2^K(X; {\mb Q}) \to {\mb R}$ satisfying the following condition. Given a bounded solution ${\bf v}$ to the gauged Witten equation over ${\mc C}$ satisfying the conditions of Theorem \ref{thm33}, one has $E({\bf v}) \leq e([{\bf v}])$.
\end{thm}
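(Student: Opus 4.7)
The plan is to establish an exact energy identity that reduces $E({\bf v})$ at a bounded solution to a topological pairing depending only on $[{\bf v}]$. I would combine the classical Yang--Mills--Higgs identity for $|d_A u|^2$ with the quadratic completion
\beqn
\tfrac12|\mu_K(u)|^2 + \tfrac12|F_{A_K}|^2 = \tfrac12\bigl|{*}F_{A_K} + \mu_K(u)\bigr|^2 - \bigl\langle \mu_K(u), F_{A_K}\bigr\rangle,
\eeqn
so as to rewrite the full energy in the schematic form
\beqn
E({\bf v}) = \bigl\|\ov\partial_A u + \nabla{\mc W}(u)\bigr\|_{L^2}^2 + \tfrac12\bigl\|{*}F_{A_K} + \mu_K(u)\bigr\|_{L^2}^2 + \int_{\Sigma_{\mc C}^*} \bigl(u^*\omega - \langle \mu_K(u), F_{A_K}\rangle\bigr),
\eeqn
up to exact forms and to the analogous contribution coming from the fixed R-symmetry connection $A_R$. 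The last integrand represents the pullback of the equivariant symplectic form in the Cartan model, so that once properly interpreted the integral computes the equivariant pairing $\langle \omega_K, [{\bf v}] \rangle$.

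For a bounded solution, the first two nonnegative terms vanish by the gauged Witten system \eqref{eqn33}; moreover, Theorem \ref{thm33}(a) guarantees that $\ov\partial_A u = 0$ and $\nabla{\mc W}(u) = 0$ separately, so any cross term from expanding the squared Witten nonlinearity contributes nothing. The main technical obstacle is then to interpret the remaining integral as a topological invariant in spite of the non-compactness of $\Sigma_{\mc C}^*$. I would exhaust $\Sigma_{\mc C}^*$ by compact truncations $\Sigma_T$ obtained by cutting each cylindrical end at $s = T$, compute the boundary terms produced by Stokes' theorem, and use Theorem \ref{thm33}(b) together with the expected exponential decay of $d_A u$, $\mu_K(u)$ and $\nabla{\mc W}(u)$ on the ends to show that these boundary terms converge as $T \to \infty$ to the contribution of the orbifold completion $Y_{\mc C}([\hat\gamma_1], \ldots, [\hat\gamma_k])$. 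In the limit, the integral equals $\langle \omega_K, [{\bf v}] \rangle$.

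To deduce the theorem, whose bound is phrased in terms of the image of $[{\bf v}]$ in $H_2^K(X;{\mb Q})$ rather than the finer equivariant class carried by $X_W$, I would set
\beqn
e(\bar\beta) := \sup \bigl\{ \langle \omega_K, [{\bf v}] \rangle \ \big|\ {\bf v} \text{ a bounded solution with } [{\bf v}] \mapsto \bar\beta \bigr\}.
\eeqn
Finiteness of this supremum is the remaining delicate point. It holds because the asymptotic conjugacy classes $[\hat\gamma_j]$ lie in a finite set (as noted after Theorem \ref{thm33}) and the monodromy integers $m_j$ are confined to $\{0, 1, \ldots, r-1\}$, so only finitely many topological types of the $\hat K$-bundle on the orbifold completion can produce a given $\bar\beta$; the R-symmetry bundle $L_R$ then contributes a universally bounded correction coming from the uniform curvature bound on $A_R$. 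Combining these observations yields the required function $e$.
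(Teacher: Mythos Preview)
Your approach is essentially the one the paper has in mind: the paper gives no proof beyond referring to the standard symplectic vortex energy identity of Cieliebak--Gaio--Mundet--Salamon, and your proposal is precisely that computation, adapted to the present setting by invoking Theorem~\ref{thm33}(a) to kill the $\nabla{\mc W}$ contribution and handling the cylindrical ends via Theorem~\ref{thm33}(b). Remark~\ref{rem35} confirms your reading that with the fixed Chern connection $A_R$ one obtains only an inequality rather than an exact identity, and that the discrepancy is the $A_R$ correction you isolate; note, however, that since the paper records that $[{\bf v}]\in H_2^K(X;{\mb Q})$ already ``contains the topological information of the bundles $L_R$ and $P_K$'' (via $\iota_W$ on $X_W$), your finiteness argument for the supremum can be streamlined---the pairing $\langle\omega_K,[{\bf v}]\rangle$ depends only on the image class, and the residual $A_R$ term is controlled by the bounded curvature $F_{A_R}$ together with the vortex equation $\mu_K(u)=-{*}F_{A_K}$ and the relation $\mu_R|_{X_W}=\langle\mu_K,\iota_W\rangle+{\rm const}$, which converts the $R$-contribution into a topological quantity plus an integral of $F_{A_K}\wedge F_{A_R}$ type.
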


\begin{rem}\label{rem35}
The energy bound is usually necessary to prove compactness. In the theory of vortex equation, one has the energy identity, i.e., the energy is exactly equal to a topological quantity. In our case, we can achieve that by varying the connection $A_R$ and requiring $A_R$ to satisfy the vortex equation. This is actually the choice made in \cite{Tian_Xu}. 
\end{rem}

\section{Moduli Space and Compactness}\label{section4}

The moduli space needs to be compactified. Similar to the case of Gromov--Witten theory, there are a few issues we need to take care of. The first one is the bubbling. Since we have assume that $X$ is aspherical, sphere bubbling cannot happen. The second one is to study the limiting behavior of solutions on a sequence of $r$-spin curves that degenerate to a nodal curve. This is a bit subtle when we have gauge fields. For example, in \cite{Mundet_Tian_2009} the compactification of moduli space of symplectic vortices is much more complicated. However, this is partially caused by using the smooth metric for the vortex equation. Our choice of cylindrical metric for the vortex equation can eliminate this complexity and simplify the compactification. Thirdly, at punctures or when nodes are forming, a positive amount of energy can stay on an infinite cylinder and form solitons. Lastly, since the target space is noncompact, one has to prove a uniform $C^0$ bound. The $C^0$ bound is derived using an argument of \cite{Cieliebak_Gaio_Mundet_Salamon_2002}, using Hypothesis \ref{hyp24} and the holomorphicity of solutions. 

In this section, we first define the notion of solitons. Then we will introduce the notion of stable solutions. Lastly we give a rather complete proof of the $C^0$ bound, which is the crucial part of the proof of compactness. 

\subsection{Solitons}

\begin{defn}
Let $\Theta = {\mb R} \times S^1$ be the infinite cylinder. Given $m \in \{0, 1, \ldots, r-1\}$ and denote $\lambda_R = {\bm i} m /r$. A soliton is a triple ${\bf v}= (u, \phi, \psi)$, where $u \in W^{1, p}_{\rm loc}(\Theta, X_W )$ is a map, $\phi, \psi \in W^{1, p}_{\rm loc}(\Theta, {\mf k})$, such that they solves the equation
\beq\label{eqn41}
\left\{ \begin{array}{r} \partial_s u + {\mc Y}_{\phi} + J (\partial_t u + {\mc X}_{\lambda_R} + {\mc Y}_{\phi}) = 0,\\
													 \partial_s \psi - \partial_t \phi + [\phi, \psi] + \mu_K(u) = 0.
\end{array}
\right. 
\eeq
\end{defn}

One can define a notion of energy for solitons. It is easy to prove that for any bounded solution to \eqref{eqn41}, one can gauge transform it to a solution ${\bf v} = (u, \phi, \psi)$ such that 
\begin{enumerate}
\item There exists $\lambda_\pm \in {\mf k}$ such that $\displaystyle \lim_{s \to \pm \infty} (\phi(s, t), \psi(s, t)) = (0, \lambda_\pm)$.

\item There exists $x_\pm \in X_W \cap \mu_K^{-1}(0)$ such that
\begin{align}\label{eqn42}
&\ \lim_{s \to \pm \infty} e^{- [\lambda_R + \lambda_\pm] t} u(s, t) = x_\pm,\ &\ e^{2\pi (\lambda_R + \lambda_\pm)}\cdot x_\pm = x_\pm.
\end{align}
\end{enumerate}
It follows that a soliton ${\bf v}$ represents a rational equivariant homology class in $H_2^K( X_W; {\mb Q})$. To establish compactness for solitons, one also needs to prove the energy quantization property, namely, there is a positive lower bound for energy of solitons. We skip the details here.

\subsection{Stable solutions}

Let us fix genus $g$ and the number of marked points $k$. The domain of a stable soliton is a prestable $r$-spin curve $\wt{\mc C}$ of genus $g$ with $k$ marked points, such that its stabilization ${\mc C}$ is obtained from $\wt{\mc C}$ by contracting all unstable rational components with two special points, each of which is the domain of a nontrivial soliton. Such a domain can be described by a dual graph $\Gamma$. Moreover, on the principal components $\Sigma_\alpha$ (i.e., those that are not contracted by stabilization) one has chosen a volume form which is of cylindrical type and which varies smoothly over $\ov{\mc M}{}_{g, k}^r$.

\begin{defn}
Let $\wt{\mc C}$ be a prestable $r$-spin curve with combinatorial type $\Gamma$. A {\bf stable solution} with underlying curve $\wt{\mc C}$ consists of a collection of smooth bounded solutions ${\bf v}_\alpha$ to the gauged Witten equation for all principal components $\Sigma_\alpha$ and a collection of solitons ${\bf v}_\delta$ for all bubble components $\Sigma_\delta$. These solutions should satisfy the following matching conditions at nodes. Namely, for each node $w$ with preimages $w', w''$ in the normalization of $\wt{\mc C}$, the monodromies of the solutions around $w'$, $w''$, denoted by $\hat\gamma', \hat\gamma'' \in K$, should satisfy $\hat\gamma' \hat\gamma'' = 1$. Moreover, the evaluations at $w', w''$ in $\bar{X}_{W, [\hat\gamma']} = \bar{X}_{W, [\hat\gamma'']}$ coincide. 

The homology class of a stable solution is the sum in $H_2^K(X_W; {\mb Q})$ of the classes of all components. For each dual graph $\Gamma$ and $B \in H_2^K(X_W; {\mb Q})$, let ${\mc M}_\Gamma(X, W; B)$ be the moduli space of gauge equivalence classes of stable solutions over prestable $r$-spin curves $\wt{\mc C}$ that has the combinatorial type $\Gamma$ and homology class $B$. 
\end{defn}

 One can define a partial order $\Gamma' \leq \Gamma$ among dual graphs. Define $\ov{\mc M}_{\Gamma}(X, W; B):= \displaystyle \sqcup_{\Gamma' \leq \Gamma} {\mc M}_{\Gamma}(X, W; B)$. One can define the topology of $\ov{\mc M}_\Gamma(X, W; B)$ in the usual way for gauged maps. Although solutions to the gauged Witten equations are not typical vortices, the topology only depends how we define the convergence locally. We call this topology the {\bf G-topology}. Moreover, it is also a standard procedure to prove the following fact, thanks to the uniform $C^0$ bound.

\begin{thm}\label{thm43}
For a dual graph $\Gamma$ representing the combinatorial type of a genus $g$ $r$-spin curve with $k$ punctures, stable up to soliton components, and a homology class $B \in H_2^K(X_W; {\mb Q})$, $\ov{\mc M}_\Gamma(X, W; B)$ is compact and Hausdorff. 
\end{thm}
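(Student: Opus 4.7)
The plan is to follow the standard Gromov-type compactness strategy for gauged maps, adapted to incorporate solitons and the Witten equation. Let $\{{\bf v}_n\}$ be a sequence of stable solutions representing a fixed combinatorial type $\Gamma$ and class $B$. By Theorem \ref{thm34} (applied to each component) the total energy $E({\bf v}_n)$ is uniformly bounded by a constant depending only on $B$. The first step is to establish a uniform $C^0$ bound on the image of $u_n$. Using the maximum principle with the plurisubharmonic exhaustion function ${\mc F}_W$ provided by Hypothesis \ref{hyp24}, applied along the lines of \cite{Cieliebak_Gaio_Mundet_Salamon_2002}, together with the holomorphicity of $u_n$ established in Theorem \ref{thm33}\eqref{thm33a}, I would show that $u_n(\Sigma)$ stays inside a fixed $\hat{K}$-invariant compact subset $N \subset X_W$.

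Next, I would perform the usual Uhlenbeck-type local compactness. On any compact subdomain of a principal component away from nodes and points of energy concentration, the uniform bounds on $|d_A u|$ (combined with the $C^0$ bound) and the vortex equation $* F_{A_K} + \mu_K(u) = 0$ give $L^p$ bounds on the curvature. Passing to a Coulomb gauge locally and applying elliptic bootstrapping to the coupled system \eqref{eqn33} yields $C^\infty_{\rm loc}$ subsequential convergence after gauge transformation. The second step is to locate the finite set of points where energy concentrates: at these points I would perform the standard rescaling argument. Because $X$ is aspherical there are no sphere bubbles; any nontrivial limit of the rescaled solutions is, after using the cylindrical nature of the rescaling, a nonconstant soliton in the sense of \eqref{eqn41}. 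Using the energy quantization lower bound for solitons mentioned in Section 4.1, only finitely many such bubbles appear.

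The third step handles degenerations of the underlying $r$-spin curve. Since the cylindrical metric was chosen to vary smoothly over $\ov{\mc M}_{g,k}^r$, a sequence of $r$-spin curves degenerating to a nodal curve gives rise to longer and longer neck regions that are isometric to growing cylinders $[-T_n, T_n] \times S^1$. On these necks I would apply a soft-rescaling/annulus-exhaustion procedure: as long as a definite amount of energy remains on an unbounded subcylinder of the limit, rescale to extract a soliton, and iterate. The energy quantization for solitons ensures that this process terminates after finitely many rescalings, producing a tree of soliton components attached at the node. Together with the asymptotic analysis in Theorem \ref{thm33}\eqref{thm33b}, this shows that the limiting object consists of bounded solutions on the principal components and solitons on the bubble components, with well-defined monodromies $\hat{\gamma}', \hat{\gamma}''$ at the two sides of each node and matching evaluations in $\bar{X}_{W,[\hat\gamma']} = \bar{X}_{W,[\hat\gamma'']}$; the relation $\hat\gamma' \hat\gamma'' = 1$ is forced by the orbifold structure. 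Since the total homology class is additive over components and preserved in the limit, the resulting stable solution lies in ${\mc M}_{\Gamma'}(X, W; B)$ for some $\Gamma' \leq \Gamma$, hence in $\ov{\mc M}_\Gamma(X, W; B)$. The Hausdorff property follows because the G-topology is defined through local $C^\infty_{\rm loc}$ convergence modulo gauge on each component together with the gluing parameters at nodes, and two distinct stable solutions can always be separated by evaluating on a compact piece where convergence is classical.

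The main obstacle I expect is the third step: correctly bookkeeping the soft rescalings on the degenerating necks and on the cylindrical ends at punctures, and verifying that every bit of escaping energy is captured either by a soliton bubble or by a principal-component solution, with matching asymptotics giving the node matching condition. This requires a careful cylindrical energy decay estimate of Morse--Bott type for the gauged Witten equation near $X_W \cap \mu_K^{-1}(0)$, analogous to but more intricate than the one used in the proof of Theorem \ref{thm33}, because the asymptotic limits on the two sides of a forming node must be coupled through the gauge monodromy rather than being independent.
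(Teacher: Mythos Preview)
Your proposal is correct and follows essentially the same approach as the paper. The paper isolates the uniform $C^0$ bound as the crucial step and proves it in detail via the subharmonicity of ${\mc F}_W(u)$ coming from Hypothesis~\ref{hyp24} and the holomorphicity of $u$ (Theorem~\ref{thm33}\eqref{thm33a}), exactly as you outline; it then declares the remaining steps (local Uhlenbeck compactness, soliton bubbling with energy quantization, neck analysis under degeneration, and Hausdorffness) to be routine, citing the same references and the same observation that the cylindrical metric eliminates the complications of \cite{Mundet_Tian_2009}. Your write-up simply spells out what ``routine'' means, with no substantive deviation.
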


\subsection{Compactness}

The compactness part of Theorem \ref{thm43} relies crucially on a uniform $C^0$ bound on solutions. In the old setting of \cite{Tian_Xu}, because we need to perturb the equation, the proof becomes very complicated due to the perturbation term and the loss of the holomorphicity of solutions. In the current setting the $C^0$ bound can be derived in a way similar to the case of \cite{Cieliebak_Gaio_Mundet_Salamon_2002}, essentially by the convexity and the properness of the moment map. The proof is now extended to the situation when the conformal structure of the curves degenerate.

In more precise terms, the result on uniform $C^0$ bound is stated as follows.
\begin{prop}
For any equivariant curve class $B\in H_2^K(X_W; {\mb Q})$, there exists a $\hat K$-invariant compact subset $N = N(B) \subset X$ satisfying the following conditions. Given a smooth $r$-spin curve ${\mc C}$ and $(A, u)\in \wt{\mc M}_{\mc C}(X,W; B)$, one has $u(\Sigma^*) \subset P \times_{\hat K} N$. 
\end{prop}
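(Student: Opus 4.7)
The plan is to adapt the convexity argument of \cite{Cieliebak_Gaio_Mundet_Salamon_2002} to the present setting, using the reduction to a vortex-type equation afforded by Theorem \ref{thm33}. As a first step, note that for any bounded solution $(A,u)$ we have $u(\Sigma_{\mc C}^*) \subset Y_W^*$ and $\nabla{\mc W}(u) \equiv 0$, since $X_W \subset {\on{Crit}}\,W$ by definition. The gauged Witten equation therefore collapses on the image of $u$ to
\begin{equation*}
\bar\partial_A u = 0, \qquad *F_{A_K} + \mu_K(u) = 0,
\end{equation*}
that is, the symplectic vortex equation for a section into the $X_W$-fibre bundle. Since $\xi_W \in Z({\mf k})$ by Hypothesis \ref{hyp24}, the function ${\mc F}_W = \mu_K \cdot \xi_W$ is $\hat K$-invariant, so $f := {\mc F}_W \circ u$ descends to a well-defined function on $\Sigma_{\mc C}^*$.

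The second step is to establish a weak differential inequality $\Delta_\Sigma f \geq -C(B)$ on the superlevel set $\{ f \geq c_W(\tau)\}$ for a suitably chosen $\tau \in Z({\mf k})$. Using the $A$-holomorphicity of $u$, the identity $d\mu_K(v)\cdot\xi_W = \omega_X({\mc Y}_{\xi_W}, v)$, and the vortex equation to substitute $\tau = -* F_{A_K}$, a direct computation formally identical to \cite[Lemma 3.2]{Cieliebak_Gaio_Mundet_Salamon_2002} yields
\begin{equation*}
\Delta_\Sigma f = \bigl\langle \nabla^2 {\mc F}_W \, d_A u, d_A u \bigr\rangle + 2\,\bigl\langle \nabla {\mc F}_W, J\,{\mc Y}_{\mu_K(u) - \tau}(u) \bigr\rangle + R(A_R),
\end{equation*}
where the remainder $R(A_R)$ collects contributions of the fixed R-connection and is controlled pointwise by $\| F_{A_R}\|_{L^\infty}$, which is uniformly bounded by the choice of cylindrical metric recalled in Section \ref{section3}. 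Hypothesis \ref{hyp24} renders the first two terms nonnegative in the region where $f \geq c_W(\tau)$, so that $\Delta_\Sigma f \geq -C_1(B)$ there; the dependence on $B$ enters only through the $L^2$-control on $F_{A_K}$ coming from the a priori energy bound $E(A,u) \leq e(B)$ of Theorem \ref{thm34}.

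The last step converts this to a pointwise bound and extracts the compact set $N(B)$. The asymptotic analysis of Theorem \ref{thm33}(\ref{thm33b}) gives $\mu_K(u) \to 0$, hence $f \to 0$, at every puncture, so $f$ attains its supremum at an interior point. A standard mean-value inequality for functions weakly satisfying $\Delta f \geq -C_1$, applied on a unit cylindrical disc around such a maximum and fed with the integral control $\|\mu_K(u)\|_{L^2}^2 \leq 2 e(B)$, upgrades the differential inequality to a uniform pointwise bound $f \leq C'(B)$. Because ${\mc F}_W$ is proper on $X_W$ by Hypothesis \ref{hyp24}, the sublevel set $\{ x \in X_W : {\mc F}_W(x) \leq C'(B) \}$ has $\hat K$-invariant compact closure $N(B) \subset X$, giving the claim. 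The main obstacle is uniformity of $C_1(B)$ and $C'(B)$ as the conformal structure of ${\mc C}$ degenerates, in particular over forming long necks: this is exactly why the cylindrical metric of Section \ref{section3} was chosen, since it keeps both $\|F_{A_R}\|_{L^\infty}$ and the local Sobolev constants on unit cylinders uniformly bounded over $\ov{\mc M}{}_{g,k}^r$, allowing the mean-value argument to be applied neck-by-neck without loss in the modular parameter.
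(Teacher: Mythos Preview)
Your second step contains a genuine gap that makes the argument circular. The remainder you call $R(A_R)$ is, after the curvature substitution, exactly $-\langle \nabla{\mc F}_W(u), J{\mc X}_{F_{A_R}}(u)\rangle$. Since $\nabla{\mc F}_W = J{\mc Y}_{\xi_W}$ by the moment map condition, this equals $-\langle {\mc Y}_{\xi_W}(u), {\mc X}_{F_{A_R}}(u)\rangle$, an inner product of two infinitesimal action vector fields evaluated at $u$. On a noncompact $X$ (already in the linear model $X={\mb C}^{N+1}$) such vector fields grow linearly, so this term grows like $|u|^2$ and is \emph{not} controlled by $\|F_{A_R}\|_{L^\infty}$ alone. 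You therefore cannot assert $\Delta_\Sigma f \geq -C_1(B)$ with $C_1(B)$ a constant before already knowing a $C^0$ bound on $u$. (Separately, your displayed formula with ``$\tau=-*F_{A_K}$'' is confused: the vortex equation has already been used to produce the $\mu_K$ term, so with that substitution $\mu_K(u)-\tau \equiv 0$ and the second inequality in Hypothesis~\ref{hyp24} is doing no work in your version.)

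The paper's proof avoids this by invoking Hypothesis~\ref{hyp21}\eqref{hyp21f}: on $X_W$ the R-symmetry coincides with the $G$-action through $\iota_W$, so ${\mc X}_{F_{A_R}}|_{X_W} = {\mc Y}_{d\iota_W(*F_{A_R})}|_{X_W}$ with $d\iota_W(*F_{A_R})$ uniformly bounded in ${\mf k}$. This allows the $F_{A_R}$ contribution to be absorbed into the shift $\tau_R$ appearing in the second inequality of Hypothesis~\ref{hyp24}, yielding the clean statement $\Delta {\mc F}_W(u) \geq 0$ on the superlevel set $\{{\mc F}_W(u)\geq c_W(\tau_R)\}$. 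One then applies the maximum principle directly rather than a mean-value inequality: if $k\geq 1$ the supremum is attained at a puncture where $\mu_K(u)\to 0$, while if $k=0$ subharmonicity forces ${\mc F}_W(u)$ to be constant and the vortex equation gives $\int_{\Sigma}{\mc F}_W(u)\,\sigma_c = -\int_\Sigma \langle F_{A_K},\xi_W\rangle$, a topological quantity depending only on $B$. Note in particular that the energy bound of Theorem~\ref{thm34} and the $L^2$ control on $\mu_K(u)$ play no role in the paper's $C^0$ argument; the $B$-dependence enters only through this Chern number in the puncture-free case.
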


\begin{proof}
For any local coordinate $z = s + {\bm i} t$ on $\Sigma_{\mc C}^*$, let $\Delta$ be the standard Laplacian in this coordinate. Denote the volume form by $\sigma_K ds dt$. Denote $v_s = \partial_s u + {\mc X}_\phi$, $v_t = \partial_t u + {\mc X}_\psi$. By the vortex equation on $A_K$, one has
\beqn
\begin{split}
\nabla_s^A v_t - \nabla_t^A v_s = &\ \nabla_s (\partial_t u + {\mc X}_\psi) + \nabla_{\partial_t u + {\mc X}_\psi} {\mc X}_\phi - \nabla_t (\partial_s u + {\mc X}_\phi) - \nabla_{\partial_s u + {\mc X}_\phi} {\mc X}_\psi\\
                                = &\ \nabla_{\partial_s u} {\mc X}_\psi + {\mc X}_{\partial_s \psi} + \nabla_{\partial_t u + {\mc X}_\psi} {\mc X}_\phi - \nabla_{\partial_t u} {\mc X}_\phi - {\mc X}_{\partial_t \phi} - \nabla_{\partial_s u + {\mc X}_\phi} {\mc X}_\psi\\
																= &\ {\mc X}_{\partial_s \psi} - {\mc X}_{\partial_t \phi} + [{\mc X}_\psi, {\mc X}_\phi]\\
																= &\ {\mc X}_{F_A}\\
																= &\ {\mc X}_{F_{A_R}} - \sigma_K {\mc Y}_{\mu_K}.
\end{split}
\eeqn
Consider the proper function ${\mc F}_W = \mu_K \cdot \xi_W$ given by Hypothesis \ref{hyp24}, whose restriction to $X_W$ is $\hat{K}$-invariant. Moreover, $u$ is holomorphic with respect to $A$. Therefore
\beqn
\begin{split}
\Delta {\mc F}_W (u) &\ = \partial_s \langle \nabla {\mc F}_W (u), \partial_s u \rangle + \partial_t \langle \nabla {\mc F}_W (u), \partial_t u \rangle\\
                    &\ = \partial_s \langle \nabla {\mc F}_W (u), v_s \rangle + \partial_t \langle \nabla {\mc F}_W (u), v_t \rangle\\
                    &\ = \langle \nabla_s^A \nabla {\mc F}_W(u), v_s \rangle + \langle \nabla{\mc F}_W(u), \nabla_s^A  v_s \rangle + \langle \nabla_t^A \nabla {\mc F}_W(u), v_t \rangle + \langle \nabla {\mc F}_W(u), \nabla_t^A v_t \rangle \\
										&\ = \langle \nabla_{v_s} \nabla {\mc F}_W, v_s \rangle + \langle \nabla_{v_t} \nabla {\mc F}_W, v_t \rangle + \langle \nabla {\mc F}_W, - J \nabla^A_s v_t + J \nabla^A_t v_s \rangle \\
										&\ = \langle \nabla_{v_s} \nabla {\mc F}_W, v_s \rangle + \langle \nabla_{v_t} \nabla {\mc F}_W, v_t \rangle +  \langle \nabla {\mc F}_W, \sigma_K J {\mc Y}_{\mu_K} - J {\mc X}_{F_{A_R}} \rangle.
										\end{split}
\eeqn
Since $|F_{A_R}|$ is uniformly bounded, for some $\tau_R \in Z({\mf k})$, one has
\beqn
\Delta {\mc F}_W(u) \geq \langle \nabla_{v_s} \nabla {\mc F}_W, v_s \rangle + \langle \nabla_{v_t} \nabla {\mc F}_W, v_t \rangle + \langle \nabla {\mc F}_W, \sigma_K J {\mc Y}_{\mu_K - \tau_R} \rangle.
\eeqn
By Hypothesis \ref{hyp24}, whenever ${\mc F}_W (u(z)) \geq c_W( \tau_R)$, $\Delta {\mc F}_W(u) \geq 0$. Hence ${\mc F}_W (u)$ is a subharmonic function, which can only achieve its maximal value at punctures of $\Sigma_{\mc C}^*$. 

If ${\mc C}$ has at least one puncture, then by the asymptotic behavior of bounded solutions, at punctures the limit of $u$ is in the compact subset $X_W \cap \mu_K^{-1}(0)$, which cannot give arbitrarily large value of ${\mc F}_W$. Since ${\mc F}_W$ is proper, $u$ is contained in a compact subset. If ${\mc C}$ has no punctures, then ${\mc F}_W(u)$ is a constant function. However, since ${\mc F}_W = \mu_K \cdot \xi_W$, from the vortex equation, we know that 
\beqn
\langle * F_{A_K}, \xi_W\rangle = - \mu_K(u) \cdot \xi_W = - {\mc F}_W(u).
\eeqn
The integral of the left hand side over $\Sigma_{\mc C}^*$ is a topological quantity depending on $B$. Hence ${\mc F}_W(u)$ cannot be a very large constant.
\end{proof}

Thanks to the above $C^0$ bound, it is then routine to finish the proof of Theorem \ref{thm43}. We remark that because we used the cylindrical metric for the $K$-component of the vortex equation, the phenomenon that appeared in the approach of \cite{Mundet_Tian_2009} when conformal structure degenerates would not appear (see also \cite{Venugopalan_quasi}). The proof of Hausdorffness of $\ov{\mc M}_{g, k}(X, W; B)$ is also very similar to the case of Gromov--Witten theory. We omit the details.

\section{Abstract Theory of Virtual Cycles}\label{section5}

We recall the framework of constructing virtual fundamental cycles associated to moduli problems. Such constructions, usually called ``virtual technique,'' has a long history since it firstly appeared in algebraic Gromov--Witten theory by \cite{Li_Tian_2}. There have been many different approaches so far. The current method is based on the topological approach of \cite{Li_Tian}.

\subsection{Topological Virtual Orbifolds}

\subsubsection*{Orbifolds, orbibundles and embeddings}

We only consider topological orbifolds and continuous orbibundles. The notion of topological embeddings include the requirement that the embedding is locally flat. Further, the existence of normal bundle (or tubular neighborhood) is not automatic in the topological category.

Let $f: Y \to X$ be a topological embedding between orbifolds. A tubular neighborhood is a map $\pi: N \to Y$ where $N$ is a neighborhood of $f(Y)$ inside $X$ such that it contains another neighborhood $N'$ of $f(Y)$ such that $\pi|_{N'}: N' \to Y$ is a disk bundle over $Y$. Two tubular neighborhoods $\pi_1: N_1 \to Y$ and $\pi_2: N_2 \to Y$ are equivalent if there is another tubular neighborhood $\pi_3: N_3 \to Y$ with $N_3 \subset N_1 \cap N_2$ and $\pi_1|_{N_3} = \pi_2|_{N_3} = \pi_3$. An equivalence class of tubular neighborhoods is called a germ. A {\bf strong embedding} from $Y$ to $X$ is a topological embedding $f: Y \to X$ together with a germ of tubular neighborhoods $f^+$. An open embedding is automatically a strong embedding. Moreover, compositions of strong embeddings are still strong embeddings. 

Let $E_1 \to U_1$, $E_2 \to U_2$ be orbifold vector bundles. A bundle embedding of $E_1 \to U_1$ into $E_2 \to U_2$ is a pair $(\phi, \wh\phi)$ where $\phi: U_1 \to U_2$ is a topological embedding, and $\wh\phi: E_1 \to E_2$ is an injective bundle map which covers $\phi$. A tubular neighborhood of the bundle embedding $(\phi, \wh\phi)$ is a tubular neighborhood $\nu: N \to \phi(U_1)$ of $\phi: U_1 \to U_2$ as well as a projection
\beqn
\wh\nu: E_2|_N \to E_2|_{\phi(U_1)}
\eeqn
whose restriction to $\phi(U_1)$ is the identity. Two tubular neighborhoods $(\nu: N \to \phi(U_1), \wh\nu)$ and $(\nu': N' \to \phi(U_1), \wh\nu')$ are equivalent if they have a common, smaller tubular neighborhoods. An equivalence class of tubular neighborhoods is also called a germ. A strong embedding of orbifold vector bundles is a bundle embedding together with a germ of tubular neighborhoods. Strong bundle embeddings can also be composed.

\subsubsection*{Multisections and perturbations}

When involved with group actions, it is generally impossible to achieve transversality while preserving the equivariance. A typical way to handle the equivariant situation is to use multi-valued perturbations.

Let $A$, $B$ be sets, $l\in {\mb N}$, and ${\mc S}^l(B)$ be the $l$-fold symmetric product of $B$. An {\bf $l$-multimap} $f$ from $A$ to $B$ is a map $f: A \to {\mc S}^l(B)$. For another $l' \in {\mb N}$, there is a natural map $m_{l'}: {\mc S}^l(B) \to {\mc S}^{ll'} (B)$. Hence $f$ can be identified with an $ll'$-multimap $m_{l'} \circ f$. If $\Gamma= \{ g_1, \ldots, g_s\}$ acts on $A$ and $B$, then it acts on ${\mc S}^l(B)$ for any $l\in {\mb N}$ and we can talk about $\Gamma$-equivariant multimaps from $A$ to $B$.

One can define the notion of multisections of an orbibundle $E \to X$ using local charts and transition functions. There is also a notion of topological transversality for multisections. Finally, since one can achieve transversality in the topological category by small perturbations (see \cite{Quinn}), one can perturb (single-valued) sections to transverse multisections.

\subsubsection*{Virtual orbifold atlases}

In this subsection, all orbifolds can be realized as effective global quotients as $M/\Gamma$ where $M$ is a topological manifold and $\Gamma$ is a finite group. 

Let $X$ be a compact and Hausdorff space. A {\bf virtual orbifold chart} (chart for short) on $X$ is a tuple $K = (U, E, S, \psi, F)$ where $U$ is an orbifold, $E \to U$ is an orbifold vector bundle, $S: U \to E$ is a continuous section, $\psi: S^{-1}(0) \to X$ is a homeomorphism onto its image $F\subset X$. $F$ is called the {\bf footprint} of the chart $K$. The integer ${\rm dim}^{vir} K:= {\rm dim} U - {\rm rank} E$ is called the virtual dimension of $K$. If $U' \subset U$ is a precompact open subset, then we can restrict $K$ to $U'$ in the obvious way, denoted by $K |_{U'}$, called a {\bf shrinking} of $K$. 

\begin{defn}\label{defn51}
Let $K_i:= (U_i, E_i, S_i, \psi_i, F_i)$, $i=1,2$ be two charts of $X$. An {\bf embedding} of $K_1$ into $K_2$ consists of a strong embedding ${\bm \phi}_{21}:= (\phi_{21}, \wh{\phi}_{21}, \phi_{21}^+, \wh\phi_{21}^+)$ of orbifold vector bundles and a projection $\wh\pi_{21}: E_2|_{\phi_{21}(U_1)} \to \wh\phi_{21}(E_1)$, such that the diagrams
\beqn
\xymatrix{E_1 \ar[r]^{\wh\phi_{21}}   \ar[d]^{\phi_1} & E_2 \ar[d]_{\phi_2} \\
          U_1 \ar@/^1pc/[u]^{S_1} \ar[r]^{\phi_{21}} & U_2 \ar@/_1pc/[u]_{S_2}       }\ \ \ \ \ \xymatrix{  S_1^{-1}(0) \ar[r]^{\phi_{21}} \ar[d]^{\psi_1} & S_2^{-1}(0) \ar[d]^{\psi_2}  \\     X \ar[r]^{{\rm Id}} & X }
\eeqn
commute. Moreover, denote $E_{21}:= {\rm ker} (\wh\pi_{21})\subset E_2|_{\phi_{21}(U_{1})}$ which is complementary to $\wh\phi_{21}(E_1)$. For any tubular neighborhood $(\nu_{21}: N_{21} \to \phi_{21}(U_1), \wh\nu_{21})$ representing the germ $(\phi_{21}^+, \wh\phi_{21}^+)$, consider the composition
\beqn
\xymatrix{
N_{21} \ar[r]^-{S_2} & E_2|_{N_{21}} \ar[r]^-{\wh\nu_{21}} &  E_2|_{\phi_{21}(U_1)} \ar[r]^-{{\rm Id} - \wh\pi_{21}} & E_{21}}
\eeqn
which preserves fibres. We require that its restriction to each fibre is a homeomorphism near the origin.
\end{defn}

One can prove that the composition of two embeddings is still an embedding. 

\begin{defn}\label{defn52}
Let $K_i = (U_i, E_i, S_i, \psi_i, F_i)$, $(i=1, 2)$ be two charts. A {\bf weak} coordinate change from $K_1$ to $K_2$ is a triple $T_{21} = (U_{21}, {\bm \phi}_{21}, \wh\pi_{21}) $, where
\begin{enumerate}
\item $U_{21}\subset U_1$ is an open subset.

\item $({\bm \phi}_{21}, \wh\pi_{21})$ is an embedding from $K_1|_{U_{21}}$ to $K_2$.

\indent\noindent It is called a {\bf strong} coordinate change if in addition

\item $\psi_1( (S_1)^{-1}(0) \cap U_{21}) = F_1\cap F_2  \subset X$.
\end{enumerate}
\end{defn}

\begin{lemma}\label{lemma53}
Let $K_i = (U_i, E_i, S_i, \psi_i, F_i)$, $(i=1, 2)$ be two charts and let $T_{21} = (U_{21}, {\bm \phi}_{21})$ be a weak coordinate change from $K_1$ to $K_2$. Suppose $K_i' = K_i|_{U_i'}$ be a shrinking of $K_i$. Then the restriction $T_{21}':= T_{21}|_{U_1' \cap \phi_{21}^{-1}(U_2')}$ is a weak coordinate change from $K_1'$ to $K_2'$. Moreover, if $T_{21}$ is strong, then $T_{21}'$ is also strong.
\end{lemma}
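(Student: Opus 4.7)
The plan is to verify the two claims by unwinding the definitions and observing that every axiom defining a (weak or strong) coordinate change is local and therefore survives restriction to an open subset. Set $U_{21}' := U_1' \cap \phi_{21}^{-1}(U_2')$. Since $U_{21} \subset U_1$ is open and $\phi_{21}: U_{21} \to U_2$ is continuous, $U_{21}'$ is open in $U_1'$. What remains is to show that $T_{21}'$, obtained by restricting $\boldsymbol{\phi}_{21}$ and $\wh\pi_{21}$ to $U_{21}'$, is an embedding in the sense of Definition \ref{defn51} from $K_1'|_{U_{21}'}$ to $K_2'$.

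First, the underlying topological map $\phi_{21}|_{U_{21}'}$ lands in $U_2'$ by construction, so it is a topological embedding $U_{21}' \hookrightarrow U_2'$. A germ of tubular neighborhoods for this embedding is obtained by intersecting any representative $(\nu_{21}: N_{21} \to \phi_{21}(U_{21}))$ with $U_2'$ and restricting to $\phi_{21}(U_{21}')$; the same recipe applies to the bundle-level data $(\wh\phi_{21}^+, \wh\nu_{21})$. Since the notion of strong embedding only depends on the germ, this restriction yields a well-defined strong bundle embedding of $E_1|_{U_{21}'} \to U_{21}'$ into $E_2|_{U_2'} \to U_2'$. The projection $\wh\pi_{21}$ restricts tautologically to $E_2|_{\phi_{21}(U_{21}')}$.

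Next, I would check that the two commuting diagrams of Definition \ref{defn51} and the fibrewise homeomorphism condition pass to the restriction. The two diagrams commute pointwise on $U_{21}$, so they commute on $U_{21}'$ a fortiori. For the fibrewise condition on the composition $N_{21} \to E_{21}$, the statement that it restricts to a homeomorphism on a neighborhood of the origin in each fibre is purely local over the base, so after intersecting $N_{21}$ with (a neighborhood of $\phi_{21}(U_{21}')$ in) $U_2'$ the condition still holds over $U_{21}'$. This establishes that $T_{21}'$ is a weak coordinate change from $K_1'$ to $K_2'$.

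Finally, for the strong case I would verify the footprint identity $\psi_1'((S_1')^{-1}(0) \cap U_{21}') = F_1' \cap F_2'$. The inclusion $\subset$ is immediate: for $p \in S_1^{-1}(0) \cap U_{21}'$ we have $\psi_1(p) \in F_1'$ by $p \in U_1'$, and $\psi_2(\phi_{21}(p)) = \psi_1(p)$ with $\phi_{21}(p) \in S_2^{-1}(0) \cap U_2'$, giving $\psi_1(p) \in F_2'$. For the reverse, take $x \in F_1' \cap F_2' \subset F_1 \cap F_2$. Since $T_{21}$ is strong, there exists $p \in S_1^{-1}(0) \cap U_{21}$ with $\psi_1(p) = x$; since $\psi_1$ is injective on $S_1^{-1}(0)$ (being a homeomorphism onto its image $F_1$), this $p$ is unique, and from $x \in F_1'$ we get $p \in U_1'$, while $x \in F_2'$ together with the commuting diagram and injectivity of $\psi_2$ on $S_2^{-1}(0)$ forces $\phi_{21}(p) \in U_2'$, i.e.\ $p \in \phi_{21}^{-1}(U_2')$. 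Hence $p \in U_{21}'$, as needed. No step here is serious; the only thing to be slightly careful about is the germ-level bookkeeping for tubular neighborhoods, which is what makes the "restriction of a strong embedding is a strong embedding" claim in the first paragraph honest.
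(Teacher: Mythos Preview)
The paper states Lemma~\ref{lemma53} without proof, treating it as a routine consequence of the definitions. Your verification is correct and is exactly the direct check the authors implicitly had in mind: every condition in Definitions~\ref{defn51} and~\ref{defn52} is pointwise or germ-local, and your argument for the footprint identity in the strong case (using injectivity of $\psi_1$ and $\psi_2$ on their respective zero sets to pin down the unique preimage) is the right one.
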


\begin{defn}\label{defn54}
Let $X$ be a compact Hausdorff space. A weak (resp. strong) {\bf virtual orbifold atlas} of virtual dimension $k$ on $X$ is a collection
\beqn
{\mf A}:= \Big( \Big\{ K_I:= (P_I, \psi_I, F_I)\ |\ I \in {\mc I} \Big\},\ \Big\{ T_{JI} = \big( P_{JI}, {\bm \phi}_{JI} \big) \ |\ I \leq J \Big\}\Big),
\eeqn
where $({\mc I}, \leq)$ is a finite, partially ordered set; for each $I\in {\mc I}$, $K_I$ is a chart of virtual dimension $k$; and for each pair $I \leq J$, $T_{JI} = (U_{JI}, {\bm \phi}_{JI} = (\phi_{JI}, \wh\phi_{JI}, \phi_{JI}^+))$ is a weak (resp. strong) coordinate change from $K_I$ to $K_J$. They are subject to the following conditions.
\begin{itemize}
\item {\bf (Cocycle Condition)} For $I \leq J \leq K \in {\mc I}$, denote $U_{KJI} = U_{KI} \cap \phi_{JI}^{-1} (U_{KJ}) \subset U_I$, then we require
\beqn
{\bm \phi}_{KI}|_{U_{KJI}} = {\bm \phi}_{KJ} \circ {\bm \phi}_{JI}|_{U_{KJI}}.
\eeqn
More explicitly, denote $E_{I, KJI} = E_I|_{U_{KJI}}$. Then the cocycle condition means
\beqn
\begin{split}
\phi_{KI}|_{U_{KJI}} = &\ \phi_{KJ}\circ \phi_{JI}|_{U_{KJI}},\\
\wh\phi_{KI}|_{E_{I, KJI}} = &\ \wh\phi_{KJ} \circ \wh\phi_{JI}|_{E_{I, KJI}}.
\end{split}
\eeqn

\item {\bf (Filtration Condition)} For $I\leq J\leq K \in {\mc I}$, we have,
\beqn
\wh\pi_{KI}|_{\phi_{KI}(U_{KJI})} =  \wh\pi_{JI}|_{\phi_{JI}(U_{KJI})}  \circ \wh\pi_{KJ}|_{\phi_{KI}(U_{KJI})}
\eeqn
Equivalently, if we denote $E_{JI} = {\rm ker} \wh\pi_{JI}$, then
\beq\label{eqn51}
E_{KI}|_{\phi_{KI}(U_{KJI})} = E_{KJ}|_{\phi_{KJ}(U_{KJI})} \oplus \wh\phi_{KJ} ( E_{JI}|_{\phi_{JI}(U_{KJI})}).
\eeq

\item {\bf (Overlapping Condition)} For $I, J \in {\mc I}$, $\ov{F_I} \cap \ov{F_J} \neq \emptyset \Longrightarrow I \leq J\ {\rm or}\ J \leq I$.
\end{itemize}
\end{defn}

All virtual orbifold atlases considered in this paper have definite virtual dimensions, although sometimes we do not explicitly mention it.

\begin{notation}
If $\wh\nu_{JI}: E_J|_{N_{JI}} \to E_J|_{\phi_{JI}(U_{JI})}$ is a representative of $\wh\phi_{JI}^+$, then it induces an isomorphism
\beqn
\wh\varphi_{JI}: E_J|_{N_{JI}} \simeq \nu_{JI}^* \left( E_J|_{\phi_{JI}(U_{JI})} \right).
\eeqn
On the other hand, the projection $\wh\pi_{JI}: E_J|_{\phi_{JI}(U_{JI})} \to \wh\phi_{JI}(E_I|_{U_{JI}})$ induces a splitting
\beqn
E_J|_{\phi_{JI}(U_{JI})} \simeq \wh\phi_{JI}(E_I|_{U_{JI}}) \oplus E_{JI}.
\eeqn
It can be extended to the tubular neighborhood $N_{JI}$, denoted by
\beq\label{eqn52}
\wh\varphi_{JI}: E_J|_{N_{JI}} \simeq \nu_{JI}^* \wh\phi_{JI} (E_I |_{U_{JI}}) \oplus \nu_{JI}^* E_{JI}	
\eeq
\end{notation}

\subsection{Virtual fundamental cycle}

In order to construct the virtual cycle, we need some more technical preparations. 

\begin{defn}
Let ${\mf A}:= ( \{ K_I = (U_I, E_I, S_I, \psi_I, F_I)\ |\ I \in {\mc I} \},\ \{ T_{JI}= (U_{JI}, {\bm \phi}_{JI})\ |\ I \leq J \})$ be a strong virtual orbifold atlas on $X$. A {\bf shrinking} of ${\mf A}$ is another virtual orbifold atlas ${\mf A}' = ( \{ K_I' = (U_I', E_I', S_I', \psi_I', F_I')\ |\ I \in {\mc I} \},\ \{ T_{JI}' = (U_{JI}', {\bm \phi}_{JI}')\ |\ I \leq J \})$ indexed by elements of the same set ${\mc I}$ such that 
\begin{enumerate}
\item For each $I \in {\mc I}$, $K_I'$ is a shrinking $K_I|_{U_I'}$ of $K_I$.

\item For each pair $I \leq J$, $T_{JI}'$ is the induced shrinking of $T_{JI}$ given by Lemma \ref{lemma53}.
\end{enumerate}
\end{defn}

Given a strong virtual orbifold atlas ${\mf A}:= ( \{ K_I = (U_I, E_I, S_I, \psi_I, F_I)\ |\ I \in {\mc I} \},\ \{ T_{JI}= (U_{JI}, {\bm \phi}_{JI})\ |\ I \leq J \} )$, 
we define a relation $\curlyvee$ on the disjoint union $\bigsqcup_{I \in {\mc I}} U_I$ as follows. $U_I \ni x \curlyvee y\in U_J$ if one of the following holds.
\begin{enumerate}
\item $I = J$ and $x = y$;

\item $I \leq J$, $x \in U_{JI}$ and $y = \phi_{JI}(x)$;

\item $J \leq I$, $y \in U_{IJ}$ and $x = \phi_{IJ}(y)$.
\end{enumerate}
If ${\mf A}'$ is a shrinking of ${\mf A}$, then it is easy to see that the relation $\curlyvee'$ on $\bigsqcup_{I \in {\mc I}} U_I'$ defined as above is induced from the relation $\curlyvee$ for ${\mf A}$ via restriction. Then one can prove the following result (see the proof in \cite[Appendix]{Tian_Xu_3}).

\begin{lemma}\label{lemma57}
For each virtual orbifold atlas ${\mf A}$, there exists a shrinking ${\mf A}'$ of ${\mf A}$ such that $\curlyvee'$ is an equivalence relation. Moreover, if we define $|{\mf A}'|:= \Big( \bigsqcup_{I \in {\mc I}} U_I' \Big)/ \curlyvee$, then one can make $|{\mf A}'|$ Hausdorff and for each $I \in {\mc I}$, the natural map $U_I' \to |{\mf A}'|$ is a homeomorphism onto its image.
\end{lemma}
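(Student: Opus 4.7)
I would prove Lemma \ref{lemma57} by a careful choice of shrinkings, carried out in two stages: first shrink so that $\curlyvee$ becomes transitive, then shrink further (if needed) to make the quotient Hausdorff with each $U_I'$ embedding openly.

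Stage one (transitivity). The relation $\curlyvee$ is tautologically reflexive and symmetric, so only transitivity is at stake. Suppose $x \in U_I$, $y \in U_J$, $z \in U_K$ with $x \curlyvee y$ and $y \curlyvee z$. After relabeling there are essentially two interesting configurations: (a) totally ordered, say $I \leq J \leq K$, where the Cocycle Condition gives $x \curlyvee z$ as soon as $x \in U_{KJI} = U_{KI} \cap \phi_{JI}^{-1}(U_{KJ})$; and (b) $J$ is a minimum with $J \leq I$ and $J \leq K$, so that $x = \phi_{IJ}(y)$ and $z = \phi_{KJ}(y)$, and a problem only arises when $I, K$ are mutually incomparable. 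For configuration (a), I shrink each $U_{JI}$ to a precompact open $U_{JI}'$ so that $U_{JI}' \cap \phi_{JI}^{-1}(U_{KJ}') \subset U_{KI}'$ for every comparable triple $I \leq J \leq K$; this is possible because the relevant sets are open and $\mc{I}$ is finite. For configuration (b), the identity $\psi_J(S_J^{-1}(0) \cap U_{IJ}) = F_I \cap F_J$ from the strong-coordinate-change definition, together with the Overlapping Condition $\overline{F_I} \cap \overline{F_K} = \emptyset$ for incomparable $I, K$, shows that $S_J^{-1}(0) \cap U_{IJ}$ and $S_J^{-1}(0) \cap U_{KJ}$ are disjoint closed subsets of $U_J$; using the Hausdorff property of the orbifold $U_J$ and precompactness, I separate them by disjoint open neighborhoods and then shrink $U_{IJ}'$ and $U_{KJ}'$ to land inside these neighborhoods. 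Together these two shrinkings eliminate every obstruction to transitivity while, by Lemma \ref{lemma53}, producing a bona fide shrinking ${\mf A}'$ of ${\mf A}$.

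Stage two (quotient topology). Once $\curlyvee'$ is an equivalence, the same ingredients yield Hausdorffness. For two inequivalent classes $[x] \neq [y]$ represented by $x \in U_I'$, $y \in U_J'$, I distinguish cases by whether $x, y$ lie in the zero loci. If both lie in the zero loci, their images in $X$ are defined; either they are distinct (use Hausdorffness of $X$, then pull back via $\psi_I, \psi_J$), or they coincide at a point of $F_I' \cap F_J'$, in which case the (shrunken) Overlapping Condition forces comparability, say $I \leq J$, and inequivalence then forces $x \notin U_{JI}'$ or $\phi_{JI}(x) \neq y$, both separable by disjoint open subsets of $U_J'$. If one or both points lie off the zero locus, the stage-one shrinking already localizes the equivalence to comparable pairs, so a local separation using disjoint tubular-neighborhood representatives (furnished by the strong embedding data of each $T_{JI}'$) suffices. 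Given Hausdorffness, the map $U_I' \to |{\mf A}'|$ is a continuous bijection onto its image; injectivity comes from the arrangement that a $\curlyvee'$-chain returning to $U_I'$ must be the identity, and openness is immediate from the quotient topology, so we obtain a homeomorphism onto its image.

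The main obstacle is combinatorial: shrinking $U_{JI}$ affects which $U_{KI}$ it must sit in, and shrinking a footprint $F_I'$ alters every overlap $F_I' \cap F_J'$, so the various shrinkings cannot be chosen independently. I would organize the construction as a finite induction on the poset $\mc{I}$ (from maximal elements downward, freezing higher-index data at each step and shrinking the current chart to meet the finitely many inherited containment constraints); finiteness of $\mc{I}$ guarantees termination. This reproduces the general pattern worked out in detail in \cite[Appendix]{Tian_Xu_3}, and I would invoke that argument with only the minor modifications needed to accommodate the GLSM coordinate-change structure recorded in Definitions \ref{defn51}, \ref{defn52}, and \ref{defn54}.
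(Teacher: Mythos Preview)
The paper does not actually prove Lemma~\ref{lemma57}; it simply points to \cite[Appendix]{Tian_Xu_3} for the argument, and your proposal does the same while supplying a reasonable sketch of that argument (shrink for transitivity using the Cocycle and Overlapping Conditions, then shrink further for Hausdorffness, organized as a finite induction on the poset ${\mc I}$). So your approach is aligned with the paper's; one small omission in your case analysis is the configuration where $J$ is a \emph{maximum} ($I \leq J$ and $K \leq J$ with $I,K$ incomparable), but it is handled by the same separation-in-$U_J$ idea you use for the $J$-minimum case and is in any event covered by the reference you and the paper both invoke.
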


The above conditions for the shrunk atlas ${\mf A}'$ also appear in the definition of the so-called ``good coordinate system'' in the theory of Kuranishi structures (see \cite{FOOO_14}).

\begin{thm}\label{thm58}
Let $X$ have a virtual orbifold atlas
\beqn
{\mf A} = \Big( \big\{ K_I = (P_I, \psi_I, F_I) \ |\ I \in {\mc I} \big\},\ \big\{ T_{JI} = (P_{JI}, {\bm \phi}_{JI}, \wh\pi_{JI}) \ |\ I \leq J \big\} \Big)
\eeqn
which satisfies the conditions of Lemma \ref{lemma57}. Then for any $\epsilon>0$, there exist multisections $t_I^\epsilon$ of $E_I \to U_I$ for all $I \in {\mc I}$ satisfying the following conditions.
\begin{enumerate}
\item For $I \leq J$, there is a tubular neighborhood $\wh\nu_{JI}: E_J|_{N_{JI}} \to E_J|_{\phi_{JI} (U_{JI})}$ representing the germ $\wh\phi_{JI}^+$ such that for $z_J \in N_{JI}$,
\beqn
\wh\varphi_{JI} (z_J) \Big( t_J^\epsilon(z_J) \Big) = \Big(  \wh\phi_{JI}  \big( t_I^\epsilon( \phi_{JI}^{-1} ( \nu_{JI} (z_J))) \big), 0_{JI} \Big).
\eeqn
Here we used the notation \eqref{eqn52} and $0_{JI}\in \nu_{JI}^* E_{JI}$ is the zero section.

\item For every $I \in {\mc I}$, $\| t_I^\epsilon \|_{C^0} \leq \epsilon$.

\item For every $I \in {\mc I}$, $S_I^\epsilon:= S_I + t_I^\epsilon$ is a transverse multisection.
\end{enumerate}
Denote by ${\mf S}^\epsilon$ the system $\{ S_I^\epsilon\}_{I \in {\mc I}}$ of perturbations and $({\mf S}^\epsilon)^{-1}(0):= \left(  \bigsqcup_{I \in {\mc I}}(S_I^\epsilon)^{-1}(0)\right)/\curlyvee \subset |{\mf A}|$. Then for $\epsilon$ sufficiently small, $({\mf S}^\epsilon)^{-1}(0)$ is compact.
\end{thm}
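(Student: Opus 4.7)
The plan is to construct the multisections $t_I^\epsilon$ by induction on the depth of $I$ in $({\mc I},\leq)$, treating the tubular neighborhood representative $\wh\nu_{JI}$ and the splitting \eqref{eqn52} as a template that prescribes what $t_J^\epsilon$ must look like on a neighborhood of $\phi_{JI}(U_{JI})$, and filling in the rest using the topological transversality theorem of Quinn. Rank $I$ by $d(I)$, the length of the longest chain ending at $I$. At depth zero, for each minimal $I$ I choose a transverse multisection $t_I^\epsilon$ of $E_I\to U_I$ with $\|t_I^\epsilon\|_{C^0}\leq\epsilon$; existence follows from topological transversality in orbibundles, applied chart-by-chart and patched via a partition of unity on $U_I$.

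For the inductive step, assume $t_I^\epsilon$ has been defined for all $I$ with $d(I)<d(J)$. For each $I<J$, fix a representative $\wh\nu_{JI}$ of the germ $\wh\phi_{JI}^+$, and define a prescribed section on $N_{JI}$ by inverting condition (i):
\beqn
t_{J,I}^{\rm pre}(z):=\wh\varphi_{JI}(z)^{-1}\bigl(\wh\phi_{JI}(t_I^\epsilon(\phi_{JI}^{-1}(\nu_{JI}(z)))),\,0_{JI}\bigr).
\eeqn
The crux is to show $t_{J,I}^{\rm pre}=t_{J,I'}^{\rm pre}$ on $N_{JI}\cap N_{JI'}$ whenever $I,I'<J$. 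The overlapping condition forces $I\leq I'$ or $I'\leq I$; assume $I\leq I'$. Then the cocycle identity $\wh\phi_{JI}=\wh\phi_{JI'}\circ\wh\phi_{I'I}$, together with the filtration identity \eqref{eqn51} splitting $E_{JI}$ as $E_{JI'}\oplus\wh\phi_{JI'}(E_{I'I})$, combine with the inductive compatibility of $t_{I'}^\epsilon$ over $N_{I'I}$ to yield the desired equality, provided the tubular neighborhoods $\{\wh\nu_{JI}\}_{I<J}$ have been chosen simultaneously so that $\nu_{JI}=\phi_{I'I}\circ\nu_{I'I}\circ\nu_{JI'}$ on the relevant overlap. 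This patches the prescribed pieces into a well-defined $t_J^{\rm pre}$ on $N_J:=\bigcup_{I<J}N_{JI}$.

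Next I extend $t_J^{\rm pre}$ across $U_J$ by multiplying by a cutoff function supported in a slightly larger neighborhood of $N_J$ and adding a $C^0$-small transverse multisection supported away from $\bigcup_{I<J}\phi_{JI}(U_{JI})$. The prescribed part is already transverse by the inductive hypothesis, since pulling back a transverse multisection along the submersion $\nu_{JI}$ and padding with zero in the $E_{JI}$-direction preserves transversality at zeros, so a generic small perturbation elsewhere produces a transverse $t_J^\epsilon$; shrinking $\epsilon$ if necessary preserves the $C^0$ bound. Compactness of $({\mf S}^\epsilon)^{-1}(0)$ for $\epsilon$ small then follows by contradiction: a sequence $z_n\in(S_{I_n}^{\epsilon_n})^{-1}(0)$ with $\epsilon_n\to 0$ escaping every compact subset of $|{\mf A}|$ would, after passing to a constant subsequence $I_n=I$, converge to a point in $S_I^{-1}(0)\cap\partial U_I'$ which lies outside $F_I$; this contradicts compactness of $X=\bigcup F_I$ together with the gluing via $\curlyvee$ supplied by Lemma~\ref{lemma57}.

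The main obstacle is the simultaneous choice of tubular neighborhood representatives $\{\wh\nu_{JI}\}$ needed to upgrade the patching $t_{J,I}^{\rm pre}=t_{J,I'}^{\rm pre}$ from a germinal identity to a genuine equality on an open set. This forces an auxiliary iterative shrinking of the atlas, beyond the one provided by Lemma~\ref{lemma57}, arranged so that for each triple $I\leq I'\leq J$ the three tubular neighborhood projections commute on the triple overlap; the cocycle and filtration conditions of Definition~\ref{defn54} are precisely what make this possible, but organizing the shrinking consistently across all triples (and checking it still leaves footprints covering $X$) is the most delicate combinatorial bookkeeping. Everything else—topological transversality, cutoff extension, and the compactness limit argument—is standard once this bookkeeping is in place.
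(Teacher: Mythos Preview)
The paper does not actually supply a proof of Theorem~\ref{thm58}; it is stated as part of the abstract virtual-cycle package in Section~\ref{section5} and the details are deferred to the companion references. So there is no in-paper argument to compare against. Your inductive scheme---ordering ${\mc I}$ by chain length, prescribing $t_J^\epsilon$ on $\bigcup_{I<J}N_{JI}$ via \eqref{eqn52}, checking consistency on overlaps by the cocycle and filtration conditions, and extending by cutoff plus a generic small perturbation---is the standard construction and is in line with what the cited literature does.

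Two points in your write-up deserve tightening. First, the sentence ``pulling back a transverse multisection along the submersion $\nu_{JI}$ and padding with zero in the $E_{JI}$-direction preserves transversality at zeros'' is true, but only because of the last clause of Definition~\ref{defn51}: the $E_{JI}$-component of $S_J$, composed with the tubular projection, is a fibrewise homeomorphism near the origin. This is what forces all zeros of $S_J+t_J^{\rm pre}$ inside $N_{JI}$ to lie on $\phi_{JI}(U_{JI})$ and supplies the missing normal directions for transversality there; you should invoke it explicitly. It also disposes of the worry about the cutoff transition region, since $S_J+\chi\cdot t_J^{\rm pre}$ has no zeros in $N_{JI}\setminus\phi_{JI}(U_{JI})$ for any $\chi$.

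Second, your compactness paragraph mixes a diagonal sequence $\epsilon_n\to 0$ with a fixed-$\epsilon$ statement. Cleaner: pick precompact shrinkings $U_I''\sqsubset U_I$ whose footprints still cover $X$ (possible by Lemma~\ref{lemma57}); since $S_I^{-1}(0)=\psi_I^{-1}(F_I)$ is contained in $U_I''$, for all sufficiently small $\epsilon$ the perturbed zero set $(S_I^\epsilon)^{-1}(0)$ stays inside $\overline{U_I''}$. Then $({\mf S}^\epsilon)^{-1}(0)$ is a closed subset of the compact image of $\bigsqcup_I\overline{U_I''}$ in the Hausdorff space $|{\mf A}|$, hence compact. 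Your contradiction sketch is aiming at the same conclusion but does not cleanly separate the choice of $\epsilon$ from the escaping sequence.
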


\subsubsection*{Strongly continuous maps} Consider a compact Hausdorff space $X$ equipped with a strong virtual orbifold atlas ${\mf A}$ that satisfies the conditions of Lemma \ref{lemma57}. Let $Y$ be a topological space. A {\bf strongly continuous map} from $(X, {\mf A})$ to $Y$ consists of a collection of continuous maps $f_I: U_I \to Y$ such that for all $I \leq J$, $f_J \circ \phi_{JI} = f_I$. Assume that ${\mf S}$ is a transverse multi-valued perturbation of ${\mf A}$ and ${\mf S}^{-1}(0)$ is compact in $|{\mf A}|$. We skip the discussion of orientations but our problem induces a natural orientation, so that we can define the push-forward class 
\beqn
f_*[ {\mf S}^{-1}(0)] \in H_*(Y; {\mb Q})
\eeqn
One can also prove that two different systems of choices (such as the choice of perturbations) give the same class. In application, the strongly continuous map is usually the evaluation map, which is defined not only on the moduli space $X$, but also on the thickening $|{\mf A}|$.

\section{Definition of the Invariants}\label{section6}

Now we describe how to construct a virtual orbifold atlas on $\ov{\mc M}_{g, k}(X, W; B)$ and to define the CohFT. One first needs to study the linear theory associated to the gauged Witten equation, including a computation of Fredholm index.

\subsection{The Fredholm theory and expected dimensions}

Let $k \geq 0$ be the number of punctures and $g \geq 0$ be the genus of the domain curve. Let $[\hat\gamma_1], \ldots, [\hat\gamma_k]$ be conjugacy classes of $K$ that have finite orders and choose representatives $\hat\gamma_1, \ldots, \hat\gamma_k$. By abuse of notation, $[\hat\gamma_j]$ also carries the information of an integer $m_j \in \{0, 1, \ldots, r-1\}$ that prescribes the type of the puncture $z_j$ of the $r$-spin structure. 

\begin{defn}
Let $\wt{\mc M}_{g, k}([\hat\gamma_1], \ldots, [\hat\gamma_k])$ be the set of triples $({\mc C}, {\bf v})$, where ${\mc C}$ represents a point of ${\mc M}{}_{g, k}^r(m_1, \ldots, m_k)$ and ${\bf v} = (A, u)$ is a smooth bounded solution to the gauged Witten equation \eqref{eqn33} over $\Sigma_{\mc C}^*$ such that the limiting monodromy of $A$ at the $j$-th marked point is $[\hat\gamma_j]$. Let ${\mc M}_{g, k}([\hat\gamma_1], \ldots, [\hat\gamma_k])$ be the quotient of $\wt{\mc M}_{g, k} ([\hat\gamma_1], \ldots, [\hat\gamma_k])$ by the group of smooth gauge transformations over the punctured surfaces.
\end{defn}

Each gauge equivalence class $[{\mc C},{\bf v}] \in {\mc M}_{g, k}([\hat\gamma_1], \ldots, [\hat\gamma_k])$ represents a class class $B \in H_2^K(X_W; {\mb Q})$. Then one can decompose the moduli space into connected components:
\beqn
{\mc M}_{g, k}([\hat\gamma_1], \ldots, [\hat\gamma_k]) = \bigsqcup_{B \in H_2^K(X_W; {\mb Q})} {\mc M}_{g, k}([\hat\gamma_1], \ldots, [\hat\gamma_k]; B). 
\eeqn
Meanwhile, we have a well-defined continuous evaluation map
\beqn
\ev: {\mc M}_{g, k}([\hat\gamma_1], \ldots, [\hat\gamma_k]) \to \prod_{j=1}^k \bar{X}_{W, [\hat\gamma_j]}.
\eeqn
	
We would like to realize ${\mc M}_{g, k}([\hat\gamma_1], \ldots, [\hat\gamma_k])$ as the zero set of certain Fredholm section. More precisely, fix $p>2$ and a small $\delta > 0$. Then define
\beq\label{eqn61}
{\mc B}_{\mc C} = {\mc B}_{{\mc C}, R}^{1, p, \delta} \oplus W^{1, p, \delta} \Big( \Sigma_{{\mc C}}^*, {\rm ad}(P_K)|_{\Sigma_{{\mc C}}^*} \Big) \oplus W^{1, p, \delta} \Big( \Sigma_{{\mc C}}^*, u^* Y_{{\mc C}}^* \Big) \oplus T_{\mc C},
\eeq
here ${\mc B}_{{\mc C}, R}^{1, p, \delta} \subset W^{1, p, \delta}\big( \Sigma_{{\mc C}}^*, {\rm ad}(P_R)|_{\Sigma_{{\mc C}}^*}\big)$ is the space of infinitesimal deformations of unitary connections on $L_R$ that do not change the holomorphic structure, and $T_{\mc C}$ is a finite dimensional vector space consisting of variations of the evaluations of ${\bf v}$ at the punctures.

Let ${\mc G}_{\mc C}:= W^{2, p, \delta} \big( \Sigma_{{\mc C}}^*, {\rm ad} (P_{\mc C}^* )\big)$ be the space of infinitesimal gauge transformations on ${\mc C}$. Denote
\beqn
{\mc E}_{\mc C}:= L^{p, \delta} \Big( \Sigma_{{\mc C}}^*, {\rm ad}(P_{\mc C}^*) \Big) \oplus L^{p, \delta} \Big( \Sigma_{{\mc C}}^*, u^* T^{\rm vert} Y_{{\mc C}}^* \Big).
\eeqn
Then the linearization of the gauged Witten equation over ${\mc C}$ and the linearization of the gauge transformation give a complex of Banach spaces
\beq\label{eqn62}
\xymatrix{ {\mc G}_{\mc C} \ar[r] & {\mc B}_{\mc C} \ar[r] & {\mc E}_{\mc C}}.
\eeq

\begin{prop}\label{prop62}
\eqref{eqn62} is a Fredholm complex and its Euler characteristic is  
\beqn
{\rm dim}_{\mb C} \bar{X}(2-2g) + 2 c_{\, 1}^K (B)  -  \sum_{j=1}^k \Big[ 2 \iota ([\hat\gamma_j]) + {\rm rank}_{\mb C} \nabla^2 W|_{\bar{X}_{W, [\hat\gamma_j]}} \Big].
\eeqn
Therefore the expected dimension of ${\mc M}_{g, k}([\hat\gamma_1], \ldots, [\hat\gamma_k]; B)$ is equal to 
\beqn
\big[ {\rm dim}_{\mb C} \bar{X} - 3 \big] (2-2g) + 2k  + 2 c_{\, 1}^K (B)  -  \sum_{j=1}^k \Big[ 2 \iota ([\hat\gamma_j]) + {\rm rank}_{\mb C} \nabla^2 W|_{\bar{X}_{W, [\hat\gamma_j]}} \Big].
\eeqn
\end{prop}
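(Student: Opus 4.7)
The plan is to prove the two assertions of the proposition separately: first that the complex \eqref{eqn62} is Fredholm, and second that its Euler characteristic equals the stated expression. The expected dimension formula then follows by adding $\dim_{\mb R}\ov{\mc M}{}_{g,k}^r = 6g-6+2k$, since $\ov{\mc M}{}_{g,k}^r$ is étale over $\ov{\mc M}_{g,k}$.

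For Fredholmness, away from the punctures the operators in \eqref{eqn62} are elliptic on a compact piece of $\Sigma_{\mc C}^*$, so only the behavior on the cylindrical ends matters. Near each puncture $z_j$, after trivializing using the limiting monodromy $[\hat\gamma_j]$ given by Theorem~\ref{thm33}, the complex is asymptotic to a translation-invariant operator of the form $\partial_s + L_j$ on $[0,\infty)_s\times S^1_t$. The cross-sectional operator $L_j$ is self-adjoint, and its kernel is precisely the space of infinitesimal motions inside $\bar{X}_{W,[\hat\gamma_j]}$: this combines the Morse--Bott transversality of Hypothesis~\ref{hyp21}\eqref{hyp21e}, the compatibility of $\hat\gamma_j$ with the R-symmetry, and the non-degeneracy of the holomorphic Hessian $\nabla^2 W$ on the normal bundle of $X_W$ in $X$. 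Choosing $\delta>0$ smaller than the smallest non-zero absolute eigenvalue of each $L_j$, the Lockhart--McOwen theorem delivers the Fredholm property in the weighted Sobolev setting. The finite-dimensional addendum $T_{\mc C}$ in \eqref{eqn61} is engineered precisely to absorb the kernel of each $L_j$.

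For the index formula, I would split \eqref{eqn62} into a gauge sub-complex and a matter piece. The gauge sub-complex is the weighted de Rham complex with values in $\mathrm{ad}(P_K)$ on the punctured surface --- no $R$-contribution enters since $A_R$ is fixed by the $r$-spin datum --- and its Euler characteristic is $-2\dim_{\mb C} G\cdot(1-g)$ by a weighted Atiyah--Singer calculation. The matter piece is the linearization of the Witten equation $\bar\partial_A u + \nabla\mc W(u) = 0$ acting on sections of the orbifold extension of $u^*T^{\vt}Y_{\mc C}^*$ constructed in the proof of Theorem~\ref{thm33}. Applying orbifold (Kawasaki) Riemann--Roch yields
\beqn
2\dim_{\mb C} X\cdot(1-g) + 2c_{\,1}^K(B) - 2\sum_{j=1}^k \iota([\hat\gamma_j]) - \sum_{j=1}^k \mathrm{rank}_{\mb C}\nabla^2 W|_{\bar{X}_{W,[\hat\gamma_j]}},
\eeqn
in which the age terms arise from Kawasaki localization at the orbifold marked points and the Hessian term records the spectral-flow correction from $\nabla^2 W$ on the normal directions where it is non-degenerate. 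Adding the gauge and matter contributions and using $\dim_{\mb C}\bar{X} = \dim_{\mb C} X - \dim_{\mb C} G$ collapses the result to the stated Euler characteristic.

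The main obstacle will be the Hessian contribution to the matter Riemann--Roch. The complex Hessian $\nabla^2 W$ is $\mb C$-symmetric on $T^{(1,0)} X$, so as a zeroth-order term in the linearized Witten operator it is real-linear but anti-$\mb C$-linear. Its effect on the asymptotic operator $L_j$ is to shift the spectrum in a way that produces a Fredholm-index contribution of $-\mathrm{rank}_{\mb C}\nabla^2 W|_{\bar{X}_{W,[\hat\gamma_j]}}$ rather than twice that, and pinning down this factor requires a careful local model near the critical locus together with explicit analysis of the perturbed $L_j$. A secondary technicality is ensuring that the orbifold completion of $u^*T^{\vt}Y_{\mc C}^*$ carries exactly the monodromy $\iota_W(\i m_j/r)+\lambda_j$, so that the Kawasaki age contributions coincide with the Chen--Ruan degree shifts $\iota([\hat\gamma_j])$ appearing in the CohFT state space.
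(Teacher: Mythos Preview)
The paper does not actually prove Proposition~\ref{prop62}; this is a survey article, and the proposition is stated with the details deferred to the forthcoming reference \cite{Limit1}. The only hint the paper offers is the paragraph immediately following the statement: one augments the target by a Coulomb-type gauge-fixing summand ${\mc E}_{\mc C}' = L^{p,\delta}(\Sigma_{\mc C}^*, \mathrm{ad}(P_{\mc C}^*))$ and thereby rolls the three-term complex \eqref{eqn62} into a single Fredholm operator ${\mc D}_{\mc C}: {\mc B}_{\mc C} \to {\mc E}_{\mc C}^+$, whose index equals the Euler characteristic of the complex.

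Your approach is the complementary one: rather than collapsing the complex, you split it into a gauge sub-complex and a matter Cauchy--Riemann operator and add the two indices. Both strategies are standard and, executed carefully, give the same answer; yours has the advantage of making the separate contributions (gauge, Riemann--Roch, age shifts, Hessian) visible, while the paper's augmented-operator route keeps the analysis in one place. A couple of bookkeeping items in your sketch need attention. First, ${\mc B}_{\mc C}$ in \eqref{eqn61} contains the finite-dimensional summand ${\mc B}_{{\mc C},R}^{1,p,\delta}$ of unitary deformations of $A_R$ preserving the holomorphic structure, so the $R$-part is not entirely frozen; likewise ${\mc G}_{\mc C}$ involves the full $\hat K$-adjoint bundle, not just $\mathrm{ad}(P_K)$. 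Second, on a $k$-punctured surface with positive exponential weight the Euler characteristic of the weighted adjoint de~Rham complex is not simply $(2g-2)\dim_{\mb R} K$ but picks up boundary terms from the punctures, which must then cancel against pieces of $T_{\mc C}$ and the matter asymptotics. Your diagnosis of the Hessian contribution---real-linear, anti-$\mb C$-linear, producing a spectral-flow correction of $-\mathrm{rank}_{\mb C}\nabla^2 W$ rather than twice that---is on the right track and is indeed the subtle point. None of these is a genuine gap, but each requires the careful local-model analysis you anticipate in your closing paragraph.
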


When proving Proposition \ref{prop62} it is convenient to introduce a Banach space ${\mc E}_{\mc C}^+:= {\mc E}_{\mc C} \oplus {\mc E}_{\mc C}'$, where ${\mc E}_{\mc C}'= L^{p, \delta}\big( \Sigma_{\mc C}^*, {\rm ad}(P_{\mc C}^*) \big)$. Consider an ``augmented gauged Witten equation'' ${\mc F}({\bf v}') = 0 \in {\mc E}_{\mc C}^+$, which is defined for objects ${\bf v}'$ near ${\bf v}$ and the ${\mc E}_{\mc C}'$ component of ${\mc F}({\bf v}')$ is a local gauge-fixing condition. Its linearized operator 
\beq\label{eqn63}
{\mc D}_{\mc C}: {\mc B}_{\mc C} \to {\mc E}_{\mc C}^+
\eeq
is Fredholm and its index is equal to the Euler characteristic of the complex \eqref{eqn62}.

Meanwhile, the moduli space $\ov{\mc M}{}_{g, k}^r([\hat\gamma_1], \ldots, [\hat\gamma_k];B)$ has lower strata parametrized by dual graphs. A decorated dual graph $\sf \Gamma$ is a connected graph, consisting of a set of vertices $\sf V(\Gamma)$, a set of edges $\sf E (\Gamma)$ and a set of tails $\sf T(\Gamma)$, such that each ${\sf v} \in \sf V(\Gamma)$ is decorated with a class $B_{\sf v} \in H_2^K(X_W; {\mb Q})$ and a genus $g_{\sf v} \geq 0$. Given a decorated dual graph $\sf \Gamma$, ${\mc M}_{\sf \Gamma}([\gamma_1], \ldots, [\gamma_k])$ is the fibred product defined by the matching condition via the evaluation maps at nodes. 

\begin{prop}
Given a decorated dual graph $\sf \Gamma$, the expected dimension of the moduli space ${\mc M}_{\sf \Gamma} ([\gamma_1], \ldots, [\gamma_k])$ is equal to 
\beqn
\big[ {\rm dim}_{\mb C} \bar{X} - 3 \big] (2-2g) + 2k + 2c_{\, 1}^K(B) - \sum_{j=1}^k \Big[ 2 \iota ([\gamma_j]) + {\rm rank}_{\mb C}\nabla^2 W|_{\bar{X}_{W, j}} \Big] - 2 \# \sf E(\Gamma).
\eeqn
\end{prop}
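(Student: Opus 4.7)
The plan is to realize ${\mc M}_{\sf \Gamma}([\gamma_1],\ldots,[\gamma_k])$ as a fiber product, over the evaluation maps at the nodes, of the smooth-domain moduli spaces attached to the vertices of ${\sf \Gamma}$, and then to compute its expected dimension by summing the vertex contributions from Proposition \ref{prop62} and accounting for the codimensions forced by the node-matching conditions.

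For each vertex ${\sf v}\in {\sf V}({\sf \Gamma})$, I would apply Proposition \ref{prop62} with genus $g_{\sf v}$, equivariant class $B_{\sf v}$, and $k_{\sf v}$ special points --- namely, the tails of ${\sf \Gamma}$ incident to ${\sf v}$ together with the half-edges at ${\sf v}$ --- decorated with their limiting monodromies $[\hat\gamma_{{\sf v},l}]$. At each edge ${\sf e}\in {\sf E}({\sf \Gamma})$ the two half-edge monodromies satisfy $\hat\gamma_{\sf e}'\hat\gamma_{\sf e}''=1$, and by the matching condition in Section \ref{section4} the two corresponding evaluations must coincide in the common twisted sector $\bar{X}_{W,[\hat\gamma_{\sf e}']} = \bar{X}_{W,[\hat\gamma_{\sf e}'']}$, together with the matching of the linearized Witten equation across the node.

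To assemble the total I would use three standard identities for a connected decorated dual graph of arithmetic genus $g$ with $k$ tails: $\sum_{\sf v}(2-2g_{\sf v}) = (2-2g) + 2\#{\sf E}({\sf \Gamma})$ (from additivity of the topological Euler characteristic and $g = \sum g_{\sf v} + h^1({\sf \Gamma})$), $\sum_{\sf v} k_{\sf v} = k + 2\#{\sf E}({\sf \Gamma})$ (each edge accounts for two half-edges), and $\sum_{\sf v} B_{\sf v} = B$; together with the Chen--Ruan age pairing $\iota([\hat\gamma])+\iota([\hat\gamma^{-1}]) = \dim_{\mb C}\bar{X}_W - \dim_{\mb C}\bar{X}_{W,[\hat\gamma]}$ for inverse twists (which holds because the two inertia components attached to a node are canonically identified by inversion), and the geometric-phase identity $\dim_{\mb C}\bar{X}_W = \dim_{\mb C}\bar{X} - 1$ from item \eqref{hyp21e} of Hypothesis \ref{hyp21}. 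These identities are precisely engineered so that the per-tail contributions reassemble into the sum $-\sum_{j=1}^k [2\iota([\gamma_j]) + {\rm rank}_{\mb C}\nabla^2 W|_{\bar{X}_{W,j}}]$ without alteration.

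The main obstacle is the per-edge bookkeeping. One must combine the $2(\dim_{\mb C}\bar{X}-3)+4$ increment per edge supplied by the first two identities, the half-edge age and Hessian-rank terms $-2[\iota([\hat\gamma_{\sf e}'])+\iota([\hat\gamma_{\sf e}^{-1}])]$ and $-2\,{\rm rank}_{\mb C}\nabla^2 W|_{\bar{X}_{W,[\hat\gamma_{\sf e}']}}$ drawn from the two vertices, and the codimension of the matching constraint at the node. The matching involves not just the geometric identification in $\bar{X}_{W,[\hat\gamma_{\sf e}']}$ but also the coupling of the linearized Witten operator across the node via the Hessian of $W$ normal to the twisted locus, as already incorporated into the Fredholm complex \eqref{eqn62} at the vertex level. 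Verifying that, after invoking the age pairing and the hypersurface identity, all $[\hat\gamma_{\sf e}']$-dependent terms cancel and each edge contributes exactly $-2$ is the delicate step; summing over ${\sf E}({\sf \Gamma})$ then produces the claimed $-2\#{\sf E}({\sf \Gamma})$ correction and yields the stated formula.
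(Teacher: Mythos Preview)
Your approach is essentially the paper's: the text immediately following the proposition states that it ``follows formally from Proposition~\ref{prop62} and the following property of $\iota([\hat\gamma])$,'' namely $\iota([\hat\gamma]) + \iota([\hat\gamma^{-1}]) = {\rm codim}_{\mb C}[\bar X_{W,[\hat\gamma]}, \bar X_W]$, which is exactly the fiber-product-over-vertices computation you outline, with the same combinatorial identities for dual graphs.

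One correction to your bookkeeping: the ``hypersurface identity'' $\dim_{\mb C}\bar X_W = \dim_{\mb C}\bar X - 1$ is not what closes the per-edge cancellation, and it is not a consequence of Hypothesis~\ref{hyp21}\eqref{hyp21e}. If you actually carry out the sum, the residual contribution of an edge ${\sf e}$ is
\beqn
(2\dim_{\mb C}\bar X - 2) \; - \; 2\,{\rm codim}_{\mb C}[\bar X_{W,[\hat\gamma_{\sf e}]}, \bar X_W] \; - \; 2\,{\rm rank}_{\mb C}\nabla^2 W|_{\bar X_{W,[\hat\gamma_{\sf e}]}} \; - \; 2\dim_{\mb C}\bar X_{W,[\hat\gamma_{\sf e}]},
\eeqn
and this collapses to $-2$ because ${\rm rank}_{\mb C}\nabla^2 W = {\rm codim}_{\mb C}[\bar X_W, \bar X]$ (Morse--Bott nondegeneracy of $W$ along $X_W$), regardless of whether that codimension equals $1$. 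Also, the node matching condition is purely the diagonal constraint in $\bar X_{W,[\hat\gamma_{\sf e}]}$; there is no separate ``coupling of the linearized Witten operator across the node'' to subtract beyond what the vertex indices already contain.
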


This result follows formally from Proposition \ref{prop62} and the following property of $\iota([\hat\gamma])$. 
\begin{prop}
For any conjugacy class $[\hat\gamma]$ of $K$ of finite order such that $\bar{X}_{W, [\hat\gamma]} \neq \emptyset$,
\beqn
\iota([\hat\gamma]) + \iota([\hat\gamma^{-1}]) = {\rm codim}_{\mb C} \Big[ \bar{X}_{W, [\hat\gamma]}, \bar{X}_W \Big].
\eeqn
\end{prop}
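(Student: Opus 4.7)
The plan is to reduce this statement to the well-known age symmetry that underlies Chen--Ruan cohomology. I would start by unpacking the definition of $\iota([\hat\gamma])$: for any point $\bar x \in \bar X_{W,[\hat\gamma]}$, represented by $x \in X_W \cap \mu_K^{-1}(0)$ fixed by $\hat\gamma$, the element $\hat\gamma$ acts semisimply on the orbifold tangent space $T_{\bar x}\bar X_W$, which therefore decomposes into complex one-dimensional eigenspaces with eigenvalues $e^{2\pi\i\alpha_j}$, $\alpha_j \in [0,1)$, indexed by $j=1, \ldots, \dim_{\mb C}\bar X_W$. By the standard definition, $\iota([\hat\gamma]) = \sum_j \alpha_j$. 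Since the subspace on which $\hat\gamma$ acts trivially (i.e.\ $\alpha_j = 0$) is exactly $T_{\bar x}\bar X_{W,[\hat\gamma]}$, one has
\[
\#\{j : \alpha_j \neq 0\} = {\rm codim}_{\mb C}\bigl[\bar X_{W,[\hat\gamma]},\, \bar X_W\bigr].
\]

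The key computation is then immediate: $\hat\gamma^{-1}$ acts on the same tangent space with eigenvalues $e^{-2\pi\i\alpha_j}$, whose fractional parts in $[0,1)$ are $1-\alpha_j$ when $\alpha_j \neq 0$ and $0$ when $\alpha_j = 0$. Summing,
\[
\iota([\hat\gamma]) + \iota([\hat\gamma^{-1}]) = \sum_{\alpha_j \neq 0}\bigl(\alpha_j + (1-\alpha_j)\bigr) = \#\{j : \alpha_j \neq 0\} = {\rm codim}_{\mb C}\bigl[\bar X_{W,[\hat\gamma]}, \bar X_W\bigr],
\]
which is the claimed identity.

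Before declaring this complete, I would need to verify that $\iota([\hat\gamma])$ is well-defined, i.e.\ independent of the choice of representative $\hat\gamma$ in its conjugacy class and of the base point $\bar x$ within a connected component of the sector $\bar X_{W,[\hat\gamma]}$. Conjugation invariance is immediate, because $k\hat\gamma k^{-1}$ acts on $T_{k\bar x}\bar X_W$ via the push-forward of the $\hat\gamma$-action on $T_{\bar x}\bar X_W$ by the linear isomorphism induced by $k$, which preserves the spectrum. Independence of the base point within a component follows from the fact that the eigenvalues vary continuously on $\bar X_{W,[\hat\gamma]}$ yet lie in the discrete set of $n$-th roots of unity (where $n$ is the order of $\hat\gamma$), hence are locally and therefore globally constant on each component; the codimension is also locally constant, so the equality holds componentwise.

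The statement is fundamentally a pointwise piece of linear algebra about finite-order unitary automorphisms, so there is no analytic or geometric obstacle here. The only point that deserves care is a matching of conventions: the formula ${\rm deg}_{\rm CR}(\alpha) = {\rm deg}(\alpha) - \iota([\hat\gamma])$ of Section~\ref{section2} and the Fredholm index formula in Proposition~\ref{prop62} must both be interpreted with $\iota([\hat\gamma])$ equal to the age of $\hat\gamma$ on $T\bar X_W$, not on the ambient $T\bar X$. With the former convention the codimension on the right is automatically inside $\bar X_W$, exactly matching the statement; with the latter, one would land on ${\rm codim}_{\mb C}[\bar X_{W,[\hat\gamma]}, \bar X]$ instead, so this compatibility is the only thing one really has to nail down before writing the proof.
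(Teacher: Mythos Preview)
Your argument is correct: this is precisely the standard age symmetry underlying the Poincar\'e pairing in Chen--Ruan cohomology, and your pointwise linear-algebra computation is the usual way to establish it. The paper itself offers no proof of this proposition; it simply records the identity as a known property of the degree-shifting number and uses it to deduce the expected dimension formula for lower strata from Proposition~\ref{prop62}. Your discussion of well-definedness and of the convention (age on $T\bar X_W$ versus $T\bar X$) is a welcome addition that the paper leaves implicit.
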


\subsection{Constructing virtual cycles}

We give a short account on how to construct a virtual cycle satisfying the axioms. The detail will appear in \cite{Limit1}.	

\subsubsection*{Constructing local charts}

The first job is to construct a local chart around each point of the compactified moduli space $\ov{\mc M}_{g, k}(X, W; B)$. To illustrate the construction, assume for simplicity that we have a stable solution over a {\it stable} curve with irreducible components ${\mc C}_\alpha$; in particular, there is no soliton components. The solution represents a point $p \in \ov{\mc M}_{g, k}(X, W; B)$. Moreover, assume that the solution has trivial automorphism. 

Let ${\mc B}_\alpha$ be the Banach space ${\mc B}_{\mc C}$ in \eqref{eqn61} for with ${\mc C}$ replaced by ${\mc C}_\alpha$. Define
\beqn
{\mc B}:= \Big\{ (\xi_\alpha) \in \bigoplus_\alpha {\mc B}_\alpha\ |\ {\rm matching\ condition} \Big\}.
\eeqn
Here ``matching condition'' means at each node $w$ connecting two components ${\mc C}_\alpha$ and ${\mc C}_{\alpha'}$ the component of $\xi_\alpha$ in $T_{{\mc C}_\alpha}$ and the component of $\xi_{\alpha'}$ in $T_{{\mc C}_{\alpha'}}$ are equal. Denote
\beqn
{\mc E}:= \bigoplus_\alpha {\mc E}_\alpha^+.
\eeqn
Therefore, the restriction of the direct sum of all ${\mc D}_{{\mc C}_\alpha}$ (see \eqref{eqn63}) to ${\mc B}$, denoted by ${\mc D}: {\mc B} \to {\mc E}$ is a Fredholm operator. 

From the ellipticity of ${\mc D}$, one can fine a finite dimensional subspace $E:= \bigoplus_\alpha E_\alpha \subset {\mc E}$, generated by smooth sections that have compact supports away from the nodes and punctures. (When there is a nontrivial automorphism group, one has to require that $E$ is invariant under the automorphism group.) Then define
\beq\label{eqn64}
U_{\rm map}:= \Big\{ (e_\alpha, \xi_\alpha)\in E \times {\mc B}\ |\ e_\alpha + {\mc D}_\alpha(\xi_\alpha) = 0 \Big\}.
\eeq
We still use $U_{\rm map}$ to denote a small neighborhood of the origin of the above vector space.

Moreover, introduce the space of smoothing parameters $U_{\rm def}$ which is a small ball centered at the origin of the universal infinitesimal deformation space of ${\mc C}$. Then $\zeta \in U_{\rm def}$ parametrizes a family of stable $r$-spin curves ${\mc C}_\zeta$ that represent points in a neighborhood of $[{\mc C}]$ in $\ov{\mc M}{}_{g, k}^r$. Moreover, there is a subspace $U_{\rm def}' \subset U_{\rm def}$ parametrizing nearby curves that have the same combinatorial type as ${\mc C}$, i.e., without smoothing out any nodes. Let $U_{\rm def}''$ be the complement which is homeomorphic to a product of $k$ small disks in ${\mb C}$ where $k$ is equal to the number of nodes of ${\mc C}$. Let a general element of $U_{\rm def}$ be $\zeta = (\zeta', \zeta'')$. 

Then by a standard application of the implicit function theorem for Banach spaces, one obtain a family of objects $(\wt{e}_\alpha, \wt{\bf v}_\alpha, {\mc C}_{\zeta'})$ over the nodal curve ${\mc C}_{\zeta'}$, parametrized by elements in $U_{\rm map} \times U_{\rm def}'$ where $(\wt{e}_\alpha, \wt{\bf v}_\alpha)$ is a solution to $\wt{e}_\alpha + {\mc F}_{{\mc C}_\alpha}(\wt{\bf v}_\alpha) = 0$.

Now turning on the smoothing parameter. For $\zeta = (\zeta', \zeta'')$ with $\zeta'' \neq 0$, one can construct a family of appropriate solutions $(\wt{e}^{\rm app}, \wt{\bf v}^{\rm app})$ over ${\mc C}_\zeta$ of the equation $\wt{e}^{\rm app} + {\mc F}_{{\mc C}_\zeta}(\wt{\bf v}^{\rm app}) = 0$. They are not actual solutions, but by proving a few estimates and applying the implicit function theorem, one can show that each $(\wt{e}^{\rm app}, \wt{\bf v}^{\rm app})$ can be corrected (in a specific way) to an actual solution. For example, one needs to prove 
\begin{prop}
There is a constant $\tau>0$ such that for $\zeta$ sufficiently small, 
\beqn
\Big\|\wt{e}^{\rm app} + {\mc F}_{{\mc C}_\zeta}(\wt{\bf v}^{\rm app}) \Big\|_{L^{p, \delta}(\Sigma_{{\mc C}_\zeta}^*)} \leq |\zeta''|^\tau.
\eeqn
\end{prop}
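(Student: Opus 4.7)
The plan is the standard pre-gluing error estimate, localized to the neck regions created by the smoothing parameter $\zeta''$. First I would describe $(\wt e^{\rm app}, \wt{\bf v}^{\rm app})$ concretely. Off the necks, where ${\mc C}_\zeta$ is literally a piece of a component ${\mc C}_\alpha$, set $(\wt e^{\rm app}, \wt{\bf v}^{\rm app}) = (\wt e_\alpha, \wt{\bf v}_\alpha)$ using the solution already produced over $U_{\rm map}\times U_{\rm def}'$. On each newly-created neck $[-T_w, T_w]\times S^1$ of length $2T_w \sim -\log|\zeta_w''|$ at a node $w$, align trivializations on the two sides using the matching condition $\hat\gamma'\hat\gamma'' = 1$ and the common evaluation in $\bar X_{W,[\hat\gamma']}$, so that both component solutions approach a common asymptotic soliton $\wt{\bf v}_{{\rm asy},w}$. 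Writing $\wt{\bf v}_{\alpha^\pm} = \wt{\bf v}_{{\rm asy},w} + \eta_w^\pm$ on the two sides, set $\wt{\bf v}^{\rm app} = \wt{\bf v}_{{\rm asy},w} + \chi_w^-\eta_w^- + \chi_w^+\eta_w^+$ on the neck, with $\chi_w^\pm(s)$ smooth cutoffs satisfying $\chi_w^- + \chi_w^+ \equiv 1$ and $\chi_w^\pm \equiv 1$ for $\pm s \leq -1$. The augmentation field $\wt e^{\rm app}$ is patched by the same cutoffs; its compact support away from punctures makes this harmless.

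Off the necks $\wt e^{\rm app} + {\mc F}_{{\mc C}_\zeta}(\wt{\bf v}^{\rm app})$ vanishes identically, so the $L^{p,\delta}$-norm reduces to a sum over the finitely many necks. On a single neck, expand
\beqn
{\mc F}_{{\mc C}_\zeta}(\wt{\bf v}_{{\rm asy},w} + \eta) = {\mc F}_{{\mc C}_\zeta}(\wt{\bf v}_{{\rm asy},w}) + L_w\eta + Q_w(\eta),
\eeqn
where $L_w$ is the linearization at the soliton and $Q_w$ is the quadratic remainder. The soliton term vanishes, and using the equation $L_w\eta_w^\pm + Q_w(\eta_w^\pm) + \wt e_{\alpha^\pm} = 0$ on each side, the full error on the neck simplifies to $[L_w,\chi_w^-]\eta_w^- + [L_w,\chi_w^+]\eta_w^+ + Q_w(\chi_w^-\eta_w^- + \chi_w^+\eta_w^+) - \chi_w^- Q_w(\eta_w^-) - \chi_w^+ Q_w(\eta_w^+)$. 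The commutators are first-order in $\eta_w^\pm$ supported on $|s|\leq 1$; the quadratic remainders are pointwise dominated by $|\eta_w^-|^2 + |\eta_w^+|^2$ in the same region.

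Now I would invoke exponential decay. By Morse--Bott asymptotic analysis of the gauged Witten equation at the soliton, modulo infinitesimal gauge and the residual R-symmetry from $\iota_W({\mb C}^*)$, the tails satisfy
\beqn
|\eta_w^\pm(s,t)| + |\nabla\eta_w^\pm(s,t)| \leq C e^{-\lambda(T_w \mp s)}
\eeqn
where $\lambda>0$ is the smallest positive modulus of eigenvalue of the asymptotic operator; positivity is a consequence of Hypothesis \ref{hyp21}\eqref{hyp21e}. On $|s|\leq 1$ this gives $|\eta_w^\pm|, |\nabla\eta_w^\pm| \leq Ce^{-\lambda T_w}$, and the quadratic part gives $Ce^{-2\lambda T_w}$. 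Since the weight $e^{\delta|s|}$ is bounded by a constant on $|s|\leq 1$, integrating the $p$-th power and taking $p$-th root produces $\| \wt e^{\rm app} + {\mc F}_{{\mc C}_\zeta}(\wt{\bf v}^{\rm app})\|_{L^{p,\delta}({\rm neck}_w)} \leq Ce^{-\lambda T_w} \leq C|\zeta_w''|^{\lambda/2}$. Summing over the finitely many smoothed nodes and using $|\zeta''| = \max_w|\zeta_w''|$ yields the claim with any $\tau < \lambda/2$.

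The hard part will be producing and justifying the decay rate $\lambda>0$ in this gauged, R-twisted setting. The asymptotic operator at $\wt{\bf v}_{{\rm asy},w}$ is the Hessian of a gauge-theoretic functional whose critical set involves $X_W\cap\mu_K^{-1}(0)$, so its spectral gap rests on the transverse intersection in Hypothesis \ref{hyp21}\eqref{hyp21e} together with a careful identification of the gauge-fixed kernel, including the subgroup $\iota_W({\mb C}^*)$ which acts trivially on $X_W$ and must be quotiented out before extracting positivity. A secondary subtlety, specific to our equation, is the $\zeta$-dependence of the $R$-symmetry connection $A_R$ on the neck: one must check that $A_R$ on the finite cylinder $[-T_w,T_w]\times S^1$ differs from its asymptotic-cylinder model by an amount that is exponentially small in $T_w$, so that the expansion of ${\mc F}_{{\mc C}_\zeta}$ around the soliton and the subsequent commutator and quadratic bounds remain legitimate. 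With these two points handled, the estimate reduces to the standard pre-gluing pattern from symplectic field theory and from the FJR Witten-equation gluing.
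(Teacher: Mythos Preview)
The paper does not actually prove this proposition; it is stated as one of the estimates ``one needs to prove'' for the gluing construction, with all details deferred to \cite{Limit1}. So there is no proof in the paper to compare against. Your sketch is the standard pre-gluing error estimate and is the correct approach: localize to the necks, write the error as cutoff commutators plus quadratic remainders, and dominate both by the exponential decay of the component tails toward the common asymptotic limit at the node. Your identification of the two genuine analytic inputs --- the Morse--Bott spectral gap for the asymptotic operator, resting on the transversality in Hypothesis~\ref{hyp21}\eqref{hyp21e} after gauge-fixing (including the $\iota_W({\mb C}^*)$ redundancy on $X_W$), and the control of the $R$-connection $A_R$ on the neck --- is accurate and matches what the full argument in \cite{Limit1} must supply.

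One technical point to tighten: your claim that ``the weight $e^{\delta|s|}$ is bounded by a constant on $|s|\le 1$'' presupposes a particular convention for the weight on the glued neck. In most gluing setups the weight on the finite cylinder $[-T_w,T_w]\times S^1$ is taken to peak at order $e^{\delta T_w}$ in the middle, precisely so that the glued $L^{p,\delta}$-norm is uniformly comparable to the sum of the component norms. Under that convention the estimate on $|s|\le 1$ reads $Ce^{-\lambda T_w}\cdot e^{\delta T_w}=Ce^{-(\lambda-\delta)T_w}$, and you need the standing assumption $\delta<\lambda$ to conclude; the exponent becomes any $\tau<(\lambda-\delta)/2$ rather than $\tau<\lambda/2$. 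This does not affect the outcome but should be stated carefully once you fix the weight on $\Sigma_{{\mc C}_\zeta}^*$.
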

Moreover, the ``specific way'' of correcting an appropriate solution to an actual solution depends on the choice of a right inverse of the linearization of the augmented gauged Witten equation. Let the actual solution be denoted by $(\wt{e}^{\rm exact}, \wt{u}^{\rm exact})$, which is parametrized by $U:= U_{\rm map} \times U_{\rm def}$. There is an obvious map $S: U \to E$, and an obvious map $\psi: S^{-1}(0) \to \ov{\mc M}_{g,k}(X, W)$. One also need to prove the following results
\begin{lemma}
For sufficiently small shrinking of $U_{\rm map} \times U_{\rm def}$, the map $\psi$ is a homeomorphism onto its image and covers an open neighborhood $F$ of $[{\mc C}, {\bf v}]$ in $\ov{\mc M}_{g, k}(X, W)$. 
\end{lemma}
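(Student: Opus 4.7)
The plan is to prove the lemma by establishing the four standard properties of a Kuranishi chart map: continuity, injectivity, openness of the image, and continuity of the inverse. All of these will be obtained by combining the implicit function theorem estimates already invoked in the construction of $(\wt{e}^{\rm exact}, \wt{u}^{\rm exact})$ with the compactness Theorem \ref{thm43} and the gauge-fixing condition built into the augmented equation ${\mc F} = 0$.

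First I would verify continuity of $\psi$. The solution $(\wt{e}^{\rm exact}, \wt{u}^{\rm exact})$ is obtained from the appropriate solution $(\wt{e}^{\rm app}, \wt{\bf v}^{\rm app})$ via a Newton iteration using a uniformly bounded right inverse of the linearized augmented operator, so it depends continuously (in the weighted Sobolev norm) on the parameters $(e_\alpha, \xi_\alpha, \zeta', \zeta'') \in U_{\rm map} \times U_{\rm def}$. Passing to the G-topology of Section \ref{section4}, which is defined by local convergence away from nodes together with the bubbling/neck convergence controlled by $\zeta''$, this gives continuity of $\psi$ on $S^{-1}(0)$.

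Next I would prove injectivity. Suppose two parameter values $(e, \xi, \zeta)$ and $(e', \xi', \zeta')$ in $S^{-1}(0)$ give gauge-equivalent stable solutions. The value of $\zeta$ is recovered from the conformal structure of the underlying stabilized $r$-spin curve, so $\zeta = \zeta'$. On the fixed domain ${\mc C}_\zeta$, any gauge transformation identifying the two solutions must preserve the gauge-fixing condition imposed by the ${\mc E}_{\mc C}'$ component of ${\mc F}$; by our assumption that $({\mc C}, {\bf v})$ has trivial automorphism, this gauge transformation is close to the identity and is actually forced to be the identity once the perturbation space $E$ is held fixed and both sides sit in the slice cut out by the local gauge condition. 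Then the defining equation $e_\alpha + {\mc D}_\alpha(\xi_\alpha) = 0$ from \eqref{eqn64} together with the uniqueness clause of the implicit function theorem forces $(e, \xi) = (e', \xi')$.

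Openness of the image is the part I expect to be the main obstacle, since it amounts to a \emph{slice theorem} for the moduli space near $[{\mc C}, {\bf v}]$. Given any stable solution $[{\mc C}_*, {\bf v}_*]$ that is G-topology-close to $[{\mc C},{\bf v}]$, I would first use the nearness of conformal structures to identify ${\mc C}_*$ with some ${\mc C}_\zeta$ for a unique small $\zeta \in U_{\rm def}$; then use a cut-off/pregluing procedure that inverts the approximate gluing construction to produce $(e^\sharp, \xi^\sharp) \in E \times {\mc B}$ with $e^\sharp + {\mc D}(\xi^\sharp)$ small, and finally apply the uniqueness of the corrected solution, together with the gauge-fixing slice, to identify ${\bf v}_*$ with $\psi(e^\sharp, \xi^\sharp, \zeta)$ up to gauge. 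The technical difficulty is controlling the tail behavior on long necks where the conformal structure is degenerating and where the weighted norm $L^{p,\delta}$ must be used carefully; but the same estimates that prove the gluing map is well-defined, combined with energy quantization for solitons (so that no energy escapes in the limit), give the desired inverse.

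Finally, continuity of $\psi^{-1}$ follows from a sequential compactness argument: if $\psi(p_n)$ converges in the G-topology to $\psi(p_\infty)$ but $p_n \not\to p_\infty$, pass to a subsequence converging to some $p_\infty' \neq p_\infty$ by compactness of a small closed neighborhood; continuity of $\psi$ forces $\psi(p_\infty') = \psi(p_\infty)$, contradicting injectivity. After shrinking $U_{\rm map} \times U_{\rm def}$ to ensure all of the above estimates are uniform, the restriction of $\psi$ to $S^{-1}(0)$ is therefore a homeomorphism onto an open neighborhood $F$ of $[{\mc C},{\bf v}]$ in $\ov{\mc M}_{g,k}(X,W)$.
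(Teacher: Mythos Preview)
The paper does not actually prove this lemma: it is stated in the paragraph labelled ``Constructing local charts'' and immediately followed by ``Therefore, the tuple $K = (U, E, S, \psi, F)$ defines a virtual orbifold chart,'' with all analytic details deferred to the forthcoming reference \cite{Limit1}. So there is no proof in the paper to compare against.

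That said, your outline is the standard route for this kind of statement in the Kuranishi/virtual-chart literature (compare the corresponding arguments in \cite{Fukaya_Ono}, \cite{Li_Tian}, and the gluing analysis in \cite{Mundet_Tian_2009}), and it is consistent with the ingredients the paper has already put in place: the implicit function theorem produces the family $(\wt e^{\rm exact}, \wt{\bf v}^{\rm exact})$ with uniform estimates, the augmented equation supplies the gauge slice, and compactness in the G-topology (Theorem~\ref{thm43}) underwrites the sequential argument for continuity of the inverse. One small remark on your injectivity step: recovering $\zeta$ from the gauge-equivalence class uses not just the conformal type of the stabilized curve but the full $r$-spin structure together with the assumed triviality of ${\rm Aut}({\mc C},{\bf v})$; you should say explicitly that $U_{\rm def}$ is a local slice for the $r$-spin moduli (not just $\ov{\mc M}_{g,k}$), since otherwise two distinct $\zeta$ could give isomorphic underlying curves with different $r$-spin data. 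Apart from that, the decomposition into continuity, injectivity, local surjectivity via an ``inverse pregluing'' plus uniqueness in the implicit function theorem, and a compactness-contradiction for the inverse, is exactly what one expects the omitted proof to contain.
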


Therefore, the tuple $K = (U, E, S, \psi, F)$ defines a virtual orbifold chart in the sense of Definition \ref{defn51} (with trivial isotropy group).

\subsubsection*{Gluing charts}

The local charts $K_p$ we constructed have no transition functions among them. Now we explain the basic idea of how to construct an atlas that have transition functions out of these local charts. Assume for simplicity that $\ov{\mc M}_{g, k}(X, W; B)$ has only two strata, the top stratum ${\mc M}_0$ and a lower stratum ${\mc M}_1$, where the latter is compact. Then ${\mc M}_1$ can be covered by the footprints $F_{p_i}$ of a finite collection of the charts $K_{p_1}, \ldots, K_{p_n}$. Assume furthermore that $n=2$; one can use either a slighly modified argument or induction to treat the general case. Shrink $F_{p_1}$, $F_{p_2}$ to open subsets $F_{p_1}' \sqsubset F_{p_1}$ and $F_{p_2}' \sqsubset F_{p_2}$ which still cover ${\mc M}_1$. Here $A \sqsubset B$ means $A \subset B$ and the closure of $A$ is compact. Denote $E_+ = E_{p_1} \oplus E_{p_2}$. Then one can perform the gluing construction for the bigger obstruction space $E_+$, and construct a chart $K_+ = (U_+, E_+, S_+, \psi_+, F_+)$, where $F_+$ covers a neighborhood of $\ov{F_{p_1}'}  \cap \ov{F_{p_2}'} \cap {\mc M}_1$. Furthermore, since $E_{p_i} \subset E_+$ for $i = 1, 2$, one can construct weak coordinate changes from $K_{p_i}$ to $K_+$. This is because in \eqref{eqn64}, enlarging the obstruction space $E$ results a bigger solution set $U_{\rm map}$. Then the collection $(K_{p_1}, K_{p_2}, K_+)$ together with the coordinate changes satisfies the cocycle condition and the filtration condition in Definition \ref{defn54}. 

One has to do one more step of modifications so that the atlas satisfies the overlapping condition in Definition \ref{defn54}. Indeed, take shrinkings $F_{p_i}'''  \sqsubset F_{p_i}'' \sqsubset F_{p_i}'$ such that $F_{p_1}'''$ and $F_{p_2}'''$ still cover ${\mc M}_1$. Then we can shrink $K_{p_1}$ to $K_{p_1}^*$ whose footprint is $F_{p_1}''' \setminus \ov{F_{p_2}''}$, shrink $K_{p_2}$ to $K_{p_2}^*$ whose footprint is $F_{p_2}''' \setminus \ov{F_{p_1}''}$, and shrink $K_+$ to $K_+^*$ whose footprint is $F_{p_1}' \cap F_{p_2}'$. Then it is easy to see that $K_{p_1}^*$, $K_{p_2}^*$ and $K_+^*$ satisfy the overlapping condition. 

Now the three charts cover an open neighborhood ${\mc U}_1$ of ${\mc M}_1$, whose complement is compact. Then one can find another finite collection of charts $K_{q_i}$ whose footprints cover ${\mc M}_0 \setminus {\mc U}_1$. It is similar to carry out the construction inductively, together with shrinking the charts for a few times. We skip the details. We can also make sure (by shrinking charts appropriately) that the coordinate changes are strong in the sense of Definition \ref{defn52}. Finally one constructed a strong virtual orbifold atlas on $\ov{\mc M}_{g, k}(X, W; B)$.

\subsection{The invariants}

Now we have constructed a strong atlas ${\mf A}$ on the moduli space $\ov{\mc M}_{g, k}(X, W; B)$. By results recalled in Section \ref{section5}, by shrinking, we may assume that ${\mf A}$ satisfies the conditions of Lemma \ref{lemma57}. Restricting to components labelled by monodromies at punctures, the evaluation map and the forgetful map
\beqn
\ov{\mc M}_{g, k}([\hat\gamma_1], \ldots, [\hat\gamma_k]; B) \to \ov{\mc M}_{g, k} \times \prod_{j=1}^k \bar{X}_{W, [\hat\gamma_j]}
\eeqn
can be extended to a strongly continuous map defined on ${\mf A}$. Then for any transverse multi-valued perturbation ${\mf S}$ of ${\mf A}$, the pushforward of ${\mf S}^{-1}(0)$ defines a rational homology class 
\beqn
[\ov{\mc M}_{g, k}([\hat\gamma_1], \ldots, [\hat\gamma_k]; B)]^{\rm vir} \in H_*( \ov{\mc M}_{g, k}; {\mb Q}) \otimes \prod_{j=1}^k H_* ( \bar{X}_{W, [\hat\gamma_j]}; {\mb Q}).
\eeqn
This class is independent of the choice of perturbations and various other choices. Evaluating against cohomology classes, and summing over all components of $\ov{\mc M}_{g, k}(X, W; B)$, this gives the definition of genus $g$ correlation functions
\beqn
\langle \sqbullet \rangle_{g, k}: {\mc H}^{\otimes k} \times H^*(\ov{\mc M}_{g, k}; {\mb Q}) \to \Lambda.
\eeqn

Lastly, the system of correlation functions we just defined satisfy the two splitting axioms (see Definition \ref{defn25}). This can be shown by essentially the same methods as proving the splitting axioms of Gromov--Witten invariants, although we have to make the argument adapted to the current setting. (See \cite{Ruan_Tian_97} for the case of Gromov--Witten theory in the semi-positive case and \cite{Fukaya_Ono} for the general case.)

\section{Discussions}\label{section7}

We discuss several related problems, some of which are more or less at hand and some of which are still difficult to deal with.

\subsection{The Landau--Ginzburg theories}

As the limit of GLSM, the (orbifold) Landau--Ginzburg A-model theory has been constructed by Fan--Jarvis--Ruan \cite{FJR3, FJR2}. The construction of virtual cycles on the moduli space of the Witten equation is based on the theory of Kuranishi structure of Fukaya--Ono \cite{Fukaya_Ono}. The narrow case also has an algebraic construction, due to Chang--Li--Li \cite{Chang_Li_Li} using the cosection construction of virtual cycles developed in \cite{Kiem_Li}. 

In GLSM for $W = pQ$ where $Q: {\mb C}^N \to {\mb C}$ is a quasihomogeneous polynomial, one can turn to the non-geometric phase which can recover the Landau--Ginzburg theory. Given an $r$-spin curve $({\mc C}, L_R)$ and another line bundle $L_K$, the gauged Witten equation is roughly of the following form
\beq\label{eqn71}
\left\{ \begin{array}{rc}
\ov\partial_A u + \ov{p} \nabla Q(u) = &\ 0,\\
\ov\partial_A p + \ov{Q(u)} = &\ 0,\\
F_{A_K} - {\bm i} \Big[ \sum_{i=1}^N r_i|u_i|^2 - r|p|^2 - c_K \Big] \sigma_c = &\ 0.
\end{array} \right.
\eeq
Here $u = (u_1, \ldots, u_N)$ and each $u_i$ is a section of $(L_R L_K)^{r_i}$; $p$ is a section of $L_K^{-r}$; $A_K$ is a connection on $L_K$; the LG phase requires that $c_K< 0$. If the equation is unperturbed, then one can easily derives that solutions should satisfy $u = 0$ and $\ov\partial_A p = 0$, $F_{A_K} + {\bm i} (r |p|^2 + c_K) \sigma_c = 0$. This makes $(p, A_K)$ an abelian vortex, whose moduli space is identified with $rd$-fold symmetric product of $\Sigma_{\mc C}^*$, where $d = - {\rm deg} L_K$. In this way, the GLSM is closely related to the theory of Ross-Ruan \cite{Ross_Ruan} in the narrow case. Moreover, there is a way to perturb \eqref{eqn71} in the broad case in a fashion similar to the treatment of \cite{FJR3} to define the cohomological field theory in the broad case. This will be discussed in detail in \cite{Limit3}.

\subsection{The adiabatic limits}

In GLSM without a potential function, Gaio and Salamon \cite{Gaio_Salamon_2005} proved a correspondence between the gauged Gromov--Witten invariants of $X$ and the Gromov--Witten invariants of the symplectic reduction $\bar{X}$. Their method is to use the adiabatic limit of the vortex equation, namely, the $\epsilon \to 0$ limit of the equation
\begin{align*}
&\ \ov\partial_A u = 0,\ &\ * F_A + \epsilon^{-2} \mu(u) = 0.
\end{align*}
Gaio and Salamon essentially showed that in the limit, solutions to the above equation converges to holomorphic curves in $\bar{X}$ modulo bubbling. 

Similar phenomenon happens in the adiabatic limit of the gauged Witten equation in the geometric phase. Consider the $\epsilon$-dependent version of equation \eqref{eqn33}
\begin{align*}
&\ \ov\partial_A u + \nabla {\mc W} (u) = 0,\ &\ * F_{A_K} + \epsilon^{-2} \mu_K (u) = 0.
\end{align*}
One would like to prove that as $\epsilon \to 0$, solutions converges to holomorphic curves in $\bar{X}_W$ modulo bubbling, in a fashion similar to the case of Gaio--Salamon. We state the following conjecture, which will be studied in \cite{Limit2}.

\begin{conj}
There is a class $a \in H_{\rm CR}^*(\bar{X}_W; \Lambda)$ satisfying
\beqn
\lim_{\epsilon \to 0} \langle \alpha_1 \otimes \cdots \otimes \alpha_k; \beta \rangle_{g, k}^{GLSM;\epsilon} = \sum_{l \geq 0} \langle \alpha_1 \otimes \cdots \otimes \alpha_k \otimes \underbrace{a \otimes \cdots \otimes a}_{l}; \pi^* \beta\rangle_{g, k + l}^{GW}.
\eeqn
Here the left hand side is the GLSM correlation function with the parameter $\epsilon$ and the right hand side is the orbifold Gromov--Witten invariant of $\bar{X}_W$; $\pi: \ov{\mc M}_{g, k+l} \to \ov{\mc M}_k$ is the forgetful map. The class $a$ is defined by counting affine vortices whose images are contained in $X_W$, in a similar fashion as the quantum Kirwan map proposed by Salamon and studied by Ziltener \cite{Ziltener_book} and Woodward \cite{Woodward_1, Woodward_2, Woodward_3}.
\end{conj}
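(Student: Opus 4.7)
The plan is to adapt the adiabatic limit analysis of Gaio--Salamon \cite{Gaio_Salamon_2005} (and its refinements by Ziltener and Woodward) to the gauged Witten equation. The first step is to study the $\epsilon$-dependent moduli spaces $\ov{\mc M}_{g, k}(X, W; B)^\epsilon$ for the rescaled system
\begin{align*}
\ov\partial_A u + \nabla {\mc W}(u) = 0,\ &\ * F_{A_K} + \epsilon^{-2} \mu_K(u) = 0,
\end{align*}
together with virtual fundamental classes depending on $\epsilon$. The uniform $C^0$ bound of Section \ref{section4} carries over to the $\epsilon$-family since Hypothesis \ref{hyp24} is unchanged; combined with the energy bound of Theorem \ref{thm34} and the equivariant class $B$, this gives a family of compact moduli spaces parametrized by $\epsilon \in (0, \epsilon_0]$.

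Next I would analyze the limit $\epsilon \to 0$. The curvature equation forces $\mu_K(u_\epsilon) \to 0$ in $L^2$ at rate $O(\epsilon)$ away from bubble points, while Item \eqref{thm33a} of Theorem \ref{thm33} guarantees that $u_\epsilon$ takes values in $X_W$. Hence, after passing to a subsequence, the gauge-equivalence classes of $u_\epsilon$ descend to a holomorphic orbifold map into the symplectic quotient $\bar{X}_W$. In the regions where $|d_A u_\epsilon|$ stays bounded, one has $W^{1, p}$-convergence to such a map; in the regions of concentration of energy one performs the standard rescaling $z \mapsto \epsilon z$, which produces a finite-energy solution of the \emph{unscaled} vortex-Witten equation on ${\mb C}$ (or on a half-cylinder), valued in $X_W$ since $\nabla{\mc W}(u)$ vanishes on $X_W$. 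These are precisely the \emph{affine vortices contained in} $X_W$; their moduli spaces should be compactified in the style of Ziltener \cite{Ziltener_book} and carry virtual fundamental classes defining a class $a \in H_{\rm CR}^*(\bar{X}_W; \Lambda)$ by the same push-forward construction as in Section \ref{section6}.

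The third step is a gluing/cobordism argument. One builds a parametrized virtual atlas for the total space $\bigsqcup_\epsilon \ov{\mc M}_{g, k}(X, W; B)^\epsilon$ together with its $\epsilon = 0$ limit stratum, where the latter consists of stable orbifold holomorphic maps in $\bar{X}_W$ of genus $g$ with $k + l$ marked points, the last $l$ points being decorated with affine vortex data in $X_W$. A standard pregluing plus implicit function theorem argument, analogous to the gluing constructions for affine vortices in \cite{Woodward_1, Woodward_2, Woodward_3}, produces approximate solutions for small $\epsilon > 0$ and corrects them to actual solutions of the $\epsilon$-equation; the evaluation and forgetful maps $\ev \times {\rm st}$ are continuous across the parameter family. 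The $\epsilon$-independence of the virtual cycle then yields the identification
\beqn
\lim_{\epsilon \to 0} \big[ \ov{\mc M}_{g, k}(X, W; B)^\epsilon \big]^{\rm vir} = \sum_{l, B_0 + B_1 + \cdots + B_l = B} \frac{1}{l!}\ \pi_* \Big( \big[\ov{\mc M}{}_{g, k+l}(\bar{X}_W; B_0)\big]^{\rm vir} \cap \ev_{k+1}^* a_{B_1} \cdots \ev_{k+l}^* a_{B_l} \Big),
\eeqn
and pairing with $\alpha_1 \otimes \cdots \otimes \alpha_k$ and $\beta$ gives the stated formula after summation over $B$.

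The hard part will be the gluing step, and in particular the fact that the superpotential term $\nabla{\mc W}(u)$ is not conformally invariant under the rescaling $z \mapsto \epsilon z$: the Witten equation on the small scale approaches the pure vortex equation on $X_W$ only because $\nabla{\mc W}$ vanishes along $X_W$, so the pregluing error and the right inverse for the linearized operator must be controlled in weighted norms that interpolate between the Witten scale and the vortex scale. A secondary difficulty is broad punctures at the nodes where bubbles are attached: since $\nabla^2 W$ is nondegenerate only transversely to $X_W$, one must show that the asymptotic constraint in Theorem \ref{thm33}\eqref{thm33b} is compatible with the asymptotic evaluation of affine vortices in $X_W$, which is where the hypothesis on the homomorphism $\iota_W$ in Hypothesis \ref{hyp21}\eqref{hyp21f} is essential.
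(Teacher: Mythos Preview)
The statement you are attempting to prove is labeled a \emph{Conjecture} in the paper, and the paper supplies no proof at all: the authors explicitly write ``We state the following conjecture, which will be studied in \cite{Limit2}.'' So there is no proof in the paper to compare your proposal against.

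Your outline is a plausible adaptation of the Gaio--Salamon adiabatic limit programme, and the difficulties you flag (the non-conformal $\nabla{\mc W}$ term under rescaling, and the behaviour at broad punctures) are real. But a ``proof proposal'' here is premature in the sense the exercise intends: the paper does not claim the result, and the analysis you sketch---uniform estimates in $\epsilon$, compactness of the $\epsilon\to 0$ limit with affine-vortex bubbles in $X_W$, construction of a parametrized virtual atlas across $\epsilon=0$, and the gluing theorem---is precisely the content of the announced future paper, not something that follows from results already established here. In particular, your appeal to ``$\epsilon$-independence of the virtual cycle'' presupposes the existence of a virtual atlas on the full parametrized family including the $\epsilon=0$ stratum, which is the heart of the matter and is nowhere constructed in the present paper.
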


\subsection{The wall-crossing}

As formulated by Witten \cite{Witten_LGCY}, the GLSM can unify ``infrared'' theories in different geometric phases or geometric phases with Landau--Ginzburg phases in the ``ultraviolet.'' The Landau--Ginzburg/Calabi--Yau correspondence is therefore explained as a wall-crossing phenomenon. 

A general way to prove such correspondence, or more basically, a wall-crossing formula, is by constructing a ``master space.'' For simplicity, consider the symplectic vortex equation in a Hamiltonian $S^1$-manifold $(X, \omega, \mu)$. Suppose $\tau_0$ is a critical value of $\mu$ and $\tau_\pm = \tau_0 \pm \epsilon$. Then consider the union of moduli spaces of solutions to 
\beq\label{eqn72}
\ov\partial_A u = 0,\ * F_A + \mu(u)  - \tau = 0
\eeq
for all $\tau$ in $[\tau_-,\tau_+]$. Moreover, we should consider the moduli space of ``framed vortices,'' i.e., solutions $(P, A, u)$ to \eqref{eqn72} with a point in a fixed fibre $P_z$ of $P$. This gives a master space $\wt{\mc M}_{[\tau_-, \tau_+]}(X, \mu)$. Although for $\tau = \tau_0$, \eqref{eqn72} is not Fredholm, as a whole the master space has good deformation theory. Moreover, the value of $\tau$ is a moment map for a natural $S^1$-action on the master space. In principle, using the localization, one can derive a wall-crossing formula. The reader can refer to a construction of the master space \cite{BDW} and \cite{Gonzalez_Woodward_wall_crossing}.

The principle can be applied to the case of GLSM. In fact using this idea Chang--Li--Li--Liu \cite{CLLL_15, CLLL_16} derived an algorithm of computing Gromov--Witten invariants of quintic 3-folds. We hope to explore the wall-crossing phenomenon and carry out the master space construction in future study.

\bibliography{../../symplectic_ref,../../physics_ref}

\bibliographystyle{amsplain}

\end{document}